\documentclass{amsart}
\usepackage{amsmath,amsfonts,amssymb}
\usepackage{bbm}
  \usepackage{paralist}
 \usepackage[colorlinks=true]{hyperref}
\hypersetup{urlcolor=blue, citecolor=red}

  \textheight=8.2 true in
   \textwidth=5.0 true in
    \topmargin 30pt
     \setcounter{page}{1}


\newtheorem{Thm}{Theorem}
\newtheorem{Cor}{Corollary}
\newtheorem{thm}{Theorem}[section]
\newtheorem{lem}[thm]{Lemma}
\newtheorem{prop}[thm]{Proposition}
\newtheorem{qu}[thm]{Question}

\newcommand\eps{\varepsilon}
\newcommand\RR{\mathbb{R}}
\renewcommand\SS{\mathbb{S}}

\title[Folding maps on spheres] 
      {Ergodic properties of folding maps on spheres}

\author[A. Burchard, G.R. Chambers, and A. Dranovski]{}

\subjclass{20F55 (37A45, 51F15, 60J15)}
 \keywords{Reflection, two-point symmetrization, piecewise isometry, 
Coxeter group, dense trajectory, transitive reduction,
random walks on spheres, invariant measure, unique ergodicity} 
 \email{almut@math.toronto.edu}
  \email{chambers@math.uchicago.edu}
  \email{a.dranovski@mail.utoronto.ca}



\begin{document}
\maketitle


\centerline{\scshape Almut Burchard}
\medskip
{\footnotesize
 \centerline{University of Toronto, Department
of Mathematics}
   \centerline{40 St. George St., Room 6290}
   \centerline{Toronto, ON M5S 2E4, Canada}
} 
\medskip

\centerline{\scshape Gregory R. Chambers}
\medskip
{\footnotesize
 \centerline{University of Chicago, Department of Mathematics}
   \centerline{5734 S. University Avenue, Room 208C}
   \centerline{Chicago, IL 60637, USA}
}

\medskip
\centerline{\scshape Anne Dranovski}
\medskip
{\footnotesize
 \centerline{University of Toronto, Department
of Mathematics}
   \centerline{40 St. George St., Room 6290}
   \centerline{Toronto, ON M5S 2E4, Canada}
} 
\bigskip


\begin{abstract}
We consider the trajectories
of points on $\SS^{d-1}$ under sequences of
certain folding maps associated with reflections. 
The main result characterizes 
collections of folding maps that produce dense trajectories.
The minimal number of maps in such a collection 
is $d+1$. 
\end{abstract}

\section{Introduction}

The subject of this paper is the reconstruction of
full radial symmetry from partial information.
A function on $\RR^d$ is radial if and only if it is symmetric
under reflection about arbitrary hyperplanes through the origin.
Since the orthogonal group $O(d)$ acts transitively and 
faithfully on $\RR^d$, an equivalent statement is that
the reflections generate $O(d)$.

It is well-known that a finite set of reflections suffices
to generate a dense subgroup of $O(d)$.
In the plane, the composition of two reflections
is a rotation by twice the enclosed angle. If the angle is
incommensurable with $\pi$, then the multiples of the rotation
are dense in the circle.
Likewise in dimension $d>2$, the reflections at
$d$ hyperplanes in general position
generate a dense subgroup of the orthogonal group.
From any given starting point, 
the subset of points that can be reached
by a suitable composition of these reflections
is dense in the centered sphere that contains the point. 
The corresponding random walk is 
almost surely equidistributed on the sphere.

Here we study the corresponding issues for a family of
piecewise isometries that fold the sphere
across a hyperplane onto a hemisphere. We label a folding 
map by the unit vector in the direction
of the target hemisphere, and consider sequences of such
maps indexed by a given set of directions $G\subset\SS^{d-1}$.
The principal aim
of this article is to answer the question:
{\em When can
a dense subset of points in $\SS^{d-1}$ be reached
from an arbitrary starting position by applying
folding maps with directions
chosen from $G$?} The question
came up in prior work on the convergence of random sequences
of symmetrizations~\cite{BF}. Our motivation will be discussed
in Section~\ref{sec:related}.

It turns out that a pair of obvious necessary
conditions on the set of directions
--- one geometric and one algebraic ---
is also sufficient (Theorem~\ref{Thm:dense}).
If a set of directions meets the conditions,
then the only functions on $\RR^d$ that increase under
composition with each of the corresponding folding maps
are radial. Furthermore, the
random walk generated by randomly alternating
these maps is uniquely ergodic 
(Theorem~\ref{Thm:ergodic}). The invariant measure gives
positive mass to all non-empty open
subsets, but does not agree, in general, with the uniform
measure on $\SS^{d-1}$.

\section{Main results}
\label{sec:main}

We begin with some definitions.
Let $\SS^{d-1}$  be the standard sphere,
viewed as the set of unit vectors
in $\RR^d$. The geodesic distance
$d(x,y)$ on $\SS^{d-1}$,
given by the enclosed angle between $x$ and $y$,
is related to the chordal distance
in $\RR^d$ by
$$
|x-y|^2= 2-2\cos d(x,y)\,.
$$
The symbol $O(d)$ refers to the group of orthogonal linear 
transformations on $\RR^d$,
and $SO(d)$ to the orientation-preserving
subgroup.  

The uniform probability measure on the sphere
induced by Lebesgue measure on $\RR^d$
is denoted by $\sigma$.  It will be used as a reference
measure throughout the paper. 
By a {\bf null set} we mean a subset
$A\subset\SS^{d-1}$ with $\sigma(A)=0$.
All sets and functions under
consideration are understood to be Borel
measurable, and measures are assumed to be regular
Borel measures.

For a direction $u\in\SS^{d-1}$, let
$R_u x = x-(2x\cdot u) \, u$
be the reflection of a point $x\in\RR^d$ about the orthogonal
hyperplane $u^\perp$.  Clearly, $R_u=R_{-u}$.
Let $H_u=\{x\in \RR^d \mid x\cdot u >0\}$ be the
open positive half-space associated with $u$, and
define
$$
F_u(x)	=\begin{cases}
x\,,     & \text{if}\ x\in H_u\,,\\
R_ux\,, & \text{otherwise.}
\end{cases}
$$
We call $F_u$ the {\bf folding map} in the direction of $u$,
as it folds $\RR^d$ across the crease at $u^\perp$ onto the 
closed nonnegative half-space.
It is nonlinear and idempotent $(F_u^2=F_u$).
Since $R_u$ is isometric, $F_u$ is non-expansive
$$
|F_u(x)-F_u(y)|\le |x-y|\,,
$$
with strict inequality if $x$ and $y$ lie on opposite sides
of  $u^\perp$.

The {\bf two-point symmetrization} of a real-valued function $\phi$ on
$\SS^{d-1}$ is the equimeasurable rearrangement
defined by
$$
S_u\phi (x)	=\begin{cases}
\max\{\phi(R_u x),\phi(x)\}\,,     & \text{if}\ x\in H_u\,,\\
\min\{\phi(R_u x),\phi(x)\}\,, & \text{otherwise.}
\end{cases}
$$
Note that $S_u\phi=\phi$, if and only if $\phi\circ F_u\ge \phi$.

Let $(u_n)_{n\ge 1}$ in $\SS^{d-1}$ be a sequence of directions
in $\SS^{d-1}$. We study the {\bf trajectory} of a 
point $x$ in $\RR^d$ or $\SS^{d-1}$ under the 
corresponding sequence of folding maps, given by
\begin{equation}
\label{eq:def-trajectory}
x_0=x\,,\qquad x_n= F_{u_n} x_{n-1}\ \text{for}\ n\ge 1\,.
\end{equation}
Under what conditions on the sequence $(u_n)$
are these trajectories dense in $\SS^{d-1}$?
Our main result shows that it suffices to
choose the directions from a 
small subset $G\subset \SS^{d-1}$.

\begin{Thm} [Transitivity] \label{Thm:dense}
Let $G\subset \SS^{d-1}$ be a set of directions.  If
\begin{itemize}
\item [(C1)] the open half-spaces $H_u$ with $u\in G$ cover $\SS^{d-1}$,
and
\item [(C2)] the reflections $\{R_u\mid u\in G\}$
generate a dense subgroup of $O(d)$,
\end{itemize}
then there exists a sequence $(u_n)_{n\ge 1}$ in $G$
such that for every starting point $x\in\SS^{d-1}$,
the trajectory defined by 
Eq.~\eqref{eq:def-trajectory}
is dense in $\SS^{d-1}$.
\end{Thm}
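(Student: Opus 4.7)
The plan is to establish a reachability lemma — from any $z\in\SS^{d-1}$, the trajectory can be brought within arbitrary precision of any target $y\in\SS^{d-1}$ by a finite block of folds with directions in $G$ — and then to assemble a single sequence working for all starting points by diagonalization. The essential preliminary reduction is non-expansiveness: every partial composition $T_n:=F_{u_n}\circ\cdots\circ F_{u_1}$ satisfies $d(T_n(x),T_n(x'))\leq d(x,x')$. Fixing a countable dense set $D\subset\SS^{d-1}$, it therefore suffices to construct a single sequence that produces dense trajectories from every $x\in D$; indeed, given arbitrary $x\in\SS^{d-1}$, $y\in\SS^{d-1}$ and $\eps>0$, one picks $x'\in D$ within $\eps/2$ of $x$ and then some $n$ with $d(T_n(x'),y)<\eps/2$, yielding $d(T_n(x),y)<\eps$.

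Granting the reachability lemma, the sequence $(u_n)$ is built in blocks. Enumerate triples $(x_i,y_i,\eps_i)$ with $x_i,y_i\in D$ and $\eps_i$ a positive rational so that each such triple occurs for infinitely many $i$. Inductively, after $u_1,\dots,u_{N_{i-1}}$ are chosen, set $z_i:=T_{N_{i-1}}(x_i)$ and invoke the reachability lemma to append finitely many folds $u_{N_{i-1}+1},\dots,u_{N_i}\in G$ forcing $d(T_{N_i}(x_i),y_i)<\eps_i$. By construction, the trajectory of every $x\in D$ meets every rational $\eps$-ball around every $y\in D$, so it is dense in $\SS^{d-1}$, and the reduction above then extends density to every starting point.

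The heart of the argument is the reachability lemma, which I would attack by proving that for every $z\in\SS^{d-1}$ the orbit closure $A_z:=\overline{\mathcal{F}\cdot z}$ of the semigroup $\mathcal{F}$ generated by $\{F_u:u\in G\}$ equals $\SS^{d-1}$. Since $F_u$ coincides with $R_u$ on the closed hemisphere $\SS^{d-1}\setminus H_u$, the $\mathcal{F}$-invariance of $A_z$ automatically gives $R_u(A_z\setminus H_u)\subseteq A_z$ for every $u\in G$. If this partial invariance can be upgraded to the full identity $R_u(A_z)=A_z$ for every $u\in G$, then $A_z$ is a closed set invariant under the group $\Gamma=\langle R_u:u\in G\rangle$; closedness and (C2) extend this to invariance under $\overline{\Gamma}=O(d)$, and the transitivity of $O(d)$ on $\SS^{d-1}$ forces $A_z=\SS^{d-1}$.

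The principal obstacle is the upgrade from partial to full $R_u$-invariance: for $w\in A_z\cap H_u$ one must produce a composition of folds that sends $w$ approximately to $R_u w$. My strategy is to use (C2) to approximate $R_u$ on $w$ by a reflection word $R_{w_k}\cdots R_{w_1}$ with $w_j\in G$ and to realize it fold by fold, inserting a correcting fold $F_v$ (whose existence is supplied by (C1) applied to the antipode of the running state) whenever the running point lies in $H_{w_j}$ so that $F_{w_j}$ would otherwise act as the identity. The hard technical point, which I expect to be the chief hurdle, is controlling the accumulated effect of the correcting folds so that the net group element realized stays close to $R_u$ on $w$ — either by choosing correcting directions whose residual contributions can be cancelled by further folds, or by adapting the approximating reflection word on the fly as the trajectory evolves, exploiting the flexibility that $\Gamma\cdot w$ is dense in $\SS^{d-1}$.
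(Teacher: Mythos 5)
Your assembly step is fine: diagonalizing over a countable dense set $D$ and using non-expansiveness of the partial compositions $T_n$ to pass from $D$ to all starting points is correct, and it is essentially the same device the paper uses (there the sphere is covered by finitely many $\eps/3$-balls and steering segments are concatenated, again with non-expansiveness doing the transfer). The genuine gap is in the reachability lemma, which is the entire content of the theorem. You correctly note that the orbit closure $A_z$ is compact, positively invariant, and satisfies $R_u(A_z\setminus H_u)\subseteq A_z$, and that full invariance $R_u(A_z)=A_z$ for all $u\in G$ together with (C2) and the transitivity of $O(d)$ would force $A_z=\SS^{d-1}$. But the upgrade from partial to full invariance is exactly the hard step, and your sketch does not carry it out: when you realize an approximating word $R_{w_k}\cdots R_{w_1}$ fold by fold, every time the running point lies in $H_{w_j}$ you must insert a correcting fold $F_v$, and that correction acts as a full reflection $R_v$ --- an order-one perturbation of the group element being realized, not a small error. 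No mechanism is provided for cancelling these insertions, and where corrections are needed depends on the evolving trajectory, so ``adapting the word on the fly'' is a hope rather than an argument. As written, the central claim $A_z=\SS^{d-1}$ remains unproved.

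The paper avoids the pointwise upgrade altogether by arguing measure-theoretically. If $A$ is (almost) positively invariant under every $F_u$, $u\in G$, then $\int_A x\cdot u\,d\sigma(x)\ge 0$, with equality exactly when $A$ is almost invariant under $R_u$; writing $-\eps u_0$ as a convex combination of elements of $G$ --- possible because (C1) places the origin in the interior of the convex hull of $G$ --- forces all of these integrals to vanish, so $A$ is almost invariant under every $R_u$ (Lemma~\ref{lem:inv}). A spherical-harmonics argument (Lemma~\ref{lem:01}) then shows such a set is null or co-null, and the null case is excluded for compact sets by applying the same reasoning to the neighborhoods $A_\eps$, which are again positively invariant by non-expansiveness (Lemma~\ref{lem:compact}); applied to $\overline{G_\star x}$ this yields precisely your reachability lemma. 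Note also that even if you obtained invariance of $A_z$ under each $R_u$ only up to null sets, that would not immediately give the pointwise group invariance your final step invokes --- this is exactly why the paper needs the $A_\eps$ compactness trick. To complete your route you would need either a genuinely new pointwise construction controlling the correcting folds, or to fall back on an argument of this measure-theoretic type.
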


Both assumptions on $G$ are clearly necessary for
the existence of even one dense trajectory.
In Section~\ref{sec:C1C2}, 
we show that the geometric condition (C1) is equivalent to
the origin lying in the interior of the
convex hull of $G$  in $\RR^d$~(Proposition~\ref{prop:cover}(c)).
In particular, $G$ must contain
at least $d+1$ directions that span $\RR^d$.
Condition (C2) can be replaced by an explicit 
sufficient condition (Proposition~\ref{prop:transitive}). 
Any set of directions that spans $\RR^d$ can be augmented by one more
direction to satisfy both conditions
(Proposition~\ref{prop:trans-reduction}).

As for the conclusion, we claim
that there is a sequence of directions that produces
dense trajectories simultaneously for all starting
points on the sphere. 
A useful consequence of
Theorem~\ref{Thm:dense} is the following characterization of
radial functions in terms of two-point symmetrizations.
\begin{Cor} \label{cor:radial}
Let $\phi$ be a continuous function on $\RR^d$, and
let $G\subset\SS^{d-1}$ satisfy (C1) and (C2).
Then
$$
S_u\phi = \phi\ \text{for all}\ u\in G
\quad \Longleftrightarrow\quad \phi\ \text{is radial}\,.
$$
\end{Cor}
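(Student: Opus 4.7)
The forward implication is immediate: if $\phi$ is radial, then $\phi\circ R_u=\phi$ for every $u\in\SS^{d-1}$, hence $\phi\circ F_u = \phi$, and in particular $S_u\phi=\phi$.

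For the nontrivial direction, I would use the equivalence recorded just after the definition of $S_u$: the condition $S_u\phi=\phi$ is equivalent to $\phi\circ F_u\ge\phi$. Assuming this holds for every $u\in G$, a straightforward induction on $n$ gives $\phi(x_n)\ge\phi(x_0)$ along \emph{any} trajectory $(x_n)$ formed by iterating folding maps with directions drawn from $G$, as defined in~\eqref{eq:def-trajectory}.

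Now I would invoke Theorem~\ref{Thm:dense} to select a single sequence $(u_n)_{n\ge 1}$ in $G$ that produces a dense trajectory on $\SS^{d-1}$ from every starting point. Since each $R_u$ fixes the origin and is isometric, each folding map $F_u$ preserves the Euclidean norm, and hence preserves every centered sphere $r\SS^{d-1}$. Rescaling by $r$ and applying Theorem~\ref{Thm:dense} to the rescaled sequence shows that the trajectories are simultaneously dense in each centered sphere. Fix a radius $r>0$ and two points $x,y\in r\SS^{d-1}$. Choosing $x$ as the starting point, density provides a subsequence $x_{n_k}\to y$, and by continuity $\phi(y)=\lim_k\phi(x_{n_k})\ge\phi(x)$. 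Swapping the roles of $x$ and $y$ yields $\phi(x)=\phi(y)$, so $\phi$ is constant on each centered sphere; that is, $\phi$ is radial.

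I do not expect a real obstacle: once the equivalence $S_u\phi=\phi\iff\phi\circ F_u\ge\phi$ and Theorem~\ref{Thm:dense} are in hand, the argument is a short continuity--density chase. The only point that requires a brief remark is that Theorem~\ref{Thm:dense} is stated for $\SS^{d-1}$, while $\phi$ lives on $\RR^d$; this is handled by the observation that folding maps preserve the foliation of $\RR^d$ by centered spheres, so the theorem can be applied radius by radius.
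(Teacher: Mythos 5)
Your proposal is correct and follows essentially the same route as the paper: reduce to each centered sphere by scaling, use the equivalence $S_u\phi=\phi\iff\phi\circ F_u\ge\phi$ to get monotonicity along trajectories, and then apply Theorem~\ref{Thm:dense} together with continuity and a symmetric density argument to conclude $\phi(x)=\phi(y)$ on the sphere. The only difference is that you spell out the trivial forward implication and the radius-by-radius reduction slightly more explicitly than the paper does.
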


The corollary improves upon the 
statement that for any non-radial function
there exist two-point symmetrizations that push 
the function towards its symmetric decreasing rearrangement.
This observation appears as a key step in Baernstein and Taylor's 
proof of Riesz' rearrangement inequality on the sphere~\cite[Theorem 2
and p. 252]{BT}, and many other classical results
(see also \cite[p. 226]{Bec}, \cite[Lemma 6.5]{BrSo}, 
~\cite[Lemma 2.8]{BS}), and \cite[Lemma 3.11]{Mor}.
The corollary extends directly to measurable functions
modulo null sets.

For the proof of Theorem~\ref{Thm:dense} in 
Section~\ref{sec:dense}, we consider
subsets of the sphere that are positively
invariant under the folding maps indexed by
a set $G$. Assuming $G$ satisfies (C1), every positively
invariant subset of the sphere is invariant, up to
null sets, under the corresponding reflections
(Lemma~\ref{lem:inv}). Condition (C2) 
precludes the existence of non-trivial invariant 
compact subsets of the 
sphere (Lemmas~\ref{lem:01} and~\ref{lem:compact}).
For any given starting point,
the union of all possible trajectories under
sequences of folding maps indexed by $G$ 
is a positively invariant subset. 
We argue that this union is dense,
and then select the desired dense trajectory
from it. Our construction yields a sequence of directions 
$(u_n)$ with the following property: For
any $\eps>0$, there exists a number $N$ such that
every trajectory intersects every ball
of radius $\eps$ in $\SS^{d-1}$
within the first $N$ steps, see Eq.~\eqref{eq:unif-dense}.

In the last two sections,
we consider sequences of directions $(U_n)_{n\ge 1}$
that are chosen independently at random from
a probability distribution $\mu$ on $\SS^{d-1}$,
that~is,
$$
P(U_n\in A_n, \text{for}\ n=1,\dots, N) = \prod_{n=1}^N \mu(A_n)
$$
for any Borel sets $A_1,\dots, A_N\subset\SS^{d-1}$.
The trajectory defined by
\begin{equation}
\label{eq:def-RW}
X_0=x\,,\qquad X_n= F_{U_n} X_{n-1} \ \text{for}\ n\ge 1
\end{equation}
will be called the {\bf random walk}
generated by $\mu$ with starting point $x$.
We are most interested in
examples where $\mu$ is concentrated on a finite 
set of directions.  

Let $G\subset\SS^{d-1}$ be the {\bf support} of $\mu$, that is,
the smallest closed subset of full $\mu$-measure.
If $G$ satisfies the hypotheses of Theorem~\ref{Thm:dense},
then the random walk
is almost surely dense (Corollary~\ref{prop:RW-dense}).
Our second theorem strengthens this observation.
To state the result, define a linear transformation 
on measures by
\begin{equation}\label{eq:T-sharp}
T_\mu \#\nu \, (A)
= \int_{\SS^{d-1}}
\nu(F_u^{-1}(A))\,d\mu(u)\,
\end{equation}
for $A\subset\SS^{d-1}$, and every Borel measure $\nu$.
By definition, $T_\mu\#$
maps the  probability distribution of $X_n$
to the distribution of $X_{n+1}$ for each $n\ge 0$.

\begin{Thm}[Unique ergodicity]\label{Thm:ergodic}
Let $\mu$ be a regular Borel probability measure
on $\SS^{d-1}$ whose support satisfies
{\rm (C1)} and {\rm (C2)}, and
let 
$T_\mu\#$ be given by Eq.~\eqref{eq:T-sharp}.
There is a unique regular Borel probability measure $\rho$
on $\SS^{d-1}$ with $T_\mu\#\rho = \rho$.
The support of $\rho$ is the entire
sphere $\SS^{d-1}$.
\end{Thm}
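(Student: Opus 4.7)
The plan is the classical strategy for uniquely ergodic Feller Markov chains on a compact space, adapted to folding maps. Two features are decisive: each $F_u$ is continuous and non-expansive on $\SS^{d-1}$, and under (C1) and (C2) the only closed positively invariant subsets of $\SS^{d-1}$ are the empty set and the whole sphere, as established in the proof of Theorem~\ref{Thm:dense}. First I would record that $(u,x)\mapsto F_u x$ is jointly continuous: the two branches in the definition of $F_u$ agree on $\{u\cdot x=0\}$, where $R_u x=x$. Hence $T_\mu^{\ast}f(x):=\int f(F_u x)\,d\mu(u)$ lies in $C(\SS^{d-1})$ whenever $f$ does (by dominated convergence), so $T_\mu\#$ is weak-$\ast$ continuous on the compact space of probability measures. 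Existence now follows from Krylov--Bogolyubov: any weak-$\ast$ limit $\rho$ of the Cesaro averages $\nu_N=\frac1N\sum_{n=0}^{N-1}(T_\mu\#)^n\nu_0$ is invariant, since $T_\mu\#\nu_N-\nu_N=\frac1N((T_\mu\#)^N\nu_0-\nu_0)$ has total variation at most $2/N$.

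For full support, let $\rho$ be any invariant probability measure and $S=\mathrm{supp}(\rho)$. Given $x\in S$, $u\in G$, and any open neighborhood $V$ of $F_u x$, joint continuity makes $\{v:F_v x\in V\}$ an open neighborhood of $u$; since $u\in G=\mathrm{supp}(\mu)$ this neighborhood has positive $\mu$-measure, and every $v$ in it satisfies $\rho(F_v^{-1}V)>0$ (the set $F_v^{-1}V$ is open and contains $x\in S$). Hence $\rho(V)=\int\rho(F_v^{-1}V)\,d\mu(v)>0$, proving $F_u(S)\subseteq S$. Thus $S$ is closed, nonempty, and positively invariant under $\{F_u:u\in G\}$, and Lemmas~\ref{lem:inv}, \ref{lem:01}, and \ref{lem:compact} force $S=\SS^{d-1}$.

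Uniqueness is the crux. Because $F_u$ is non-expansive, $T_\mu^{\ast}$ does not increase Lipschitz constants; for any $L$-Lipschitz $f$, the Cesaro averages $f_N=\frac1N\sum_{n=0}^{N-1}(T_\mu^{\ast})^n f$ are uniformly $L$-Lipschitz and uniformly bounded, hence equicontinuous. By Arzela--Ascoli every subsequence admits a sub-subsequence $f_{N_k}\to g$ uniformly, and the bound $\|T_\mu^{\ast}f_N-f_N\|_\infty=\frac1N\|(T_\mu^{\ast})^N f-f\|_\infty\le 2\|f\|_\infty/N\to 0$ gives $T_\mu^{\ast}g=g$. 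At any maximizer $x_0$ of $g$, the identity $g(x_0)=\int g(F_u x_0)\,d\mu(u)$ together with $g\le\max g$ forces $g(F_u x_0)=\max g$ for $\mu$-a.e.\ $u$, hence for all $u\in G$ by continuity; the same reasoning at every $x\in\{g=\max g\}$ shows this set is closed, nonempty, and positively invariant, so the lemmas above give $g\equiv\max g$. For any invariant $\rho$, invariance yields $\int f_N\,d\rho=\int f\,d\rho$ for all $N$; passing to the limit identifies the constant $g$ as $\int f\,d\rho$. If $\rho_1,\rho_2$ are both invariant, the same constant arises for each, so $\int f\,d\rho_1=\int f\,d\rho_2$ for every Lipschitz $f$; density of Lipschitz functions in $C(\SS^{d-1})$ gives $\rho_1=\rho_2$. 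The main obstacle is exactly this last step: translating the purely topological triviality of closed positively invariant sets into a quantitative uniqueness statement, which is where the non-expansiveness of the folding maps does essential work that was not needed for Theorem~\ref{Thm:dense}.
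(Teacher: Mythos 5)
Your proof is correct, but it is organized differently from the paper's. The shared engine is the same in both arguments: the Feller property and the non-expansiveness of the folding maps give (equi)continuity for the dual operator $T_\mu$ on functions, and a maximum-principle argument combined with the triviality of closed positively invariant sets (Lemmas~\ref{lem:inv}, \ref{lem:01}, \ref{lem:compact}) forces fixed functions to be constant. Where you diverge: you obtain existence by Krylov--Bogolyubov applied to Ces\`aro averages of measures, and uniqueness by applying Arzel\`a--Ascoli to Ces\`aro averages $\frac1N\sum_{n<N}(T_\mu)^n f$ of Lipschitz functions, identifying the constant limit with $\int f\,d\rho$ for any invariant $\rho$; the paper instead proves the stronger statement (Proposition~\ref{prop:ergodic}) that the \emph{un-averaged} iterates $(T_\mu)^n\phi$ converge uniformly to a constant $\bar\phi$ --- this needs the more delicate subsequence-with-increasing-gaps argument to show the Arzel\`a--Ascoli limit is itself fixed by all powers of $T_\mu$ --- and then constructs $\rho$ from the functional $\phi\mapsto\bar\phi$ via Riesz--Markov, with invariance and uniqueness read off from $\overline{T_\mu\phi}=\bar\phi$. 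You also prove full support differently and rather cleanly: you show the support of any invariant measure is a closed set that is positively invariant under $F_u$ for all $u$ in the support of $\mu$ (using joint continuity of $(u,x)\mapsto F_ux$), so the lemmas force it to be all of $\SS^{d-1}$; the paper instead uses the strict inequality $\min\phi<\bar\phi<\max\phi$ for nonconstant $\phi$ from Proposition~\ref{prop:ergodic}. The trade-off: your route is softer and more standard (and your support argument applies verbatim to any invariant measure before uniqueness is known), while the paper's full-sequence uniform convergence is a strictly stronger conclusion that it goes on to use --- it yields the mixing property stated after the theorem and underlies the discussion of convergence rates --- and recovering it from unique ergodicity alone would require the additional equivalence the paper cites from Einsiedler--Ward.
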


We refer to $\rho$ as the {\bf invariant measure} for the random walk.
By uniqueness, it assigns zero or one 
to every invariant subset. The invariant measure
governs the behavior of the random
trajectories $(X_n)$ through Birkhoff's ergodic theorem.
\begin{Cor}
\label{cor:Birkhoff}
Let $A\subset\SS^{d-1}$ be a Borel set.
Under the assumptions of Theorem~\ref{Thm:ergodic},
$$
\lim_{N\to\infty} \frac{1}{N} \left|\bigl\{
n\le N \mid X_n\in A\bigr\}\right| = \rho(A)
$$
almost surely for $\rho$-almost every starting point $x\in\SS^{d-1}$.
\end{Cor}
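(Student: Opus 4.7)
The plan is to identify the empirical count with a Birkhoff average on a skew-product dynamical system and to exploit the uniqueness in Theorem~\ref{Thm:ergodic}. Set $\Omega=(\SS^{d-1})^{\mathbb{N}}$ with product measure $\mathbb{P}=\mu^{\otimes\mathbb{N}}$, let $\sigma$ denote the left shift on $\Omega$, and define $\Phi\colon\Omega\times\SS^{d-1}\to\Omega\times\SS^{d-1}$ by $\Phi(\omega,x)=(\sigma\omega,F_{\omega_1}x)$, so that $\Phi^n(\omega,x)=(\sigma^n\omega,X_n(\omega,x))$. A short Fubini calculation using $T_\mu\#\rho=\rho$ shows $\mathbb{P}\times\rho$ is $\Phi$-invariant; Birkhoff's theorem applied to $f(\omega,x)=\mathbbm{1}_A(x)$ will then yield the statement, provided $\Phi$ is ergodic.

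To prove ergodicity, first check that the transition operator $Lg(x):=\int g(F_u x)\,d\mu(u)$ is Feller (since $F_u$ is continuous in $x$) and deduce from Theorem~\ref{Thm:ergodic}, via a Krylov--Bogolyubov-style compactness argument, that for every $g\in C(\SS^{d-1})$ the Cesaro averages $\tfrac{1}{N}\sum_{n=0}^{N-1}L^n g$ converge uniformly to $\int g\,d\rho$: any weak-$*$ subsequential limit of the empirical measures $\tfrac{1}{N}\sum L^n\delta_x$ is $T_\mu\#$-invariant and therefore equals $\rho$. Combined with the mean ergodic theorem on $L^2(\rho)$, this identifies the orthogonal projection onto the fixed subspace of $L$ with the map $g\mapsto\int g\,d\rho$ on the dense subspace $C(\SS^{d-1})$ and, by continuity, on all of $L^2(\rho)$; hence every bounded $L$-harmonic function is $\rho$-a.e. constant. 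Now take $h\in L^\infty(\mathbb{P}\times\rho)$ with $h\circ\Phi=h$ and set $\bar h(x)=\int h(\omega,x)\,d\mathbb{P}(\omega)$. The invariance $h(\omega,x)=h(\sigma^n\omega,X_n)$ together with the independence of $\sigma^n\omega$ from $(\omega_1,\ldots,\omega_n)$ yields $E[h\mid\omega_1,\ldots,\omega_n]=\bar h(X_n)$, a bounded martingale converging $\mathbb{P}\times\rho$-a.s. to $h$; the same averaging shows $L\bar h=\bar h$, so $\bar h\equiv c$ $\rho$-a.e., where $c=\int h\,d(\mathbb{P}\times\rho)$. Since $\int L^n(x,\{\bar h\ne c\})\,d\rho(x)=0$ for every $n$, the random variable $\bar h(X_n)$ equals $c$ almost surely for $\rho$-a.e. starting point, and the martingale limit forces $h\equiv c$, proving ergodicity of $\Phi$.

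With $\Phi$ ergodic, Birkhoff's theorem yields $\mathbb{P}\times\rho$-a.e. convergence of the empirical count to $\rho(A)$, and a Fubini disintegration then gives almost-sure convergence for $\rho$-a.e. starting point $x$, as required. The main obstacle in the plan is the step that extracts triviality of bounded $L$-harmonic functions from Theorem~\ref{Thm:ergodic}, where the combination of Oxtoby-type uniform Cesaro convergence and the Hilbert-space mean ergodic theorem seems essential; the remaining ingredients (invariance of $\mathbb{P}\times\rho$, the martingale identity, and the final Fubini step) are routine.
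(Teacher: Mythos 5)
Your proposal is correct, and it reaches the conclusion by a route that is close in spirit to the paper's but more self-contained. The paper likewise applies Birkhoff's theorem to a stationary extension of the chain and finishes with Fubini, but it works on the canonical path space $\prod_{n\ge 0}\SS^{d-1}$ equipped with the Markov measure $\rho^*(A)=\int P_x(A)\,d\rho(x)$, invariant under the left shift, and it simply asserts the zero--one law for shift-invariant sets, citing a standard reference, rather than proving it. You instead build the skew product $\Phi(\omega,x)=(\sigma\omega,F_{\omega_1}x)$ over the i.i.d.\ directions and supply the ergodicity argument yourself: uniqueness of the invariant measure (together with the Feller property and the mean ergodic theorem on $L^2(\rho)$) identifies the fixed space of $T_\mu$ in $L^2(\rho)$ with the constants, and the martingale argument upgrades this to triviality of bounded $\Phi$-invariant functions. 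This chain of implications (unique ergodicity $\Rightarrow$ triviality of bounded harmonic functions mod $\rho$ $\Rightarrow$ ergodicity of the stationary skew product) is standard and sound, and since the paper's path-space system is a factor of your skew product via $(\omega,x)\mapsto (X_n(\omega,x))_{n\ge 0}$, the two constructions are measure-theoretically equivalent; what your version buys is an explicit proof of the ergodicity statement the paper delegates to the literature, at the cost of extra machinery (von Neumann's theorem, martingale convergence). Three small points to tidy: the martingale identity needs conditioning on $\sigma(x,\omega_1,\dots,\omega_n)$ rather than on $(\omega_1,\dots,\omega_n)$ alone --- the starting-point coordinate must be in the filtration both for $E[h\mid\mathcal{F}_n]=\bar h(X_n)$ and for the a.s.\ limit of the martingale to be $h$ itself; the Krylov--Bogolyubov step is not really needed, since Proposition~\ref{prop:ergodic} already gives uniform convergence of $(T_\mu)^n g$, hence of its Ces\`aro averages, to $\int g\,d\rho$; and your use of $\sigma$ for the shift collides with the paper's $\sigma$ for the uniform measure, so rename it if this is to sit alongside the paper's notation.
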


Thus, typical trajectories are
equidistributed according to the invariant measure~$\rho$. Since 
$\rho$ gives positive measure
to every non-empty open subset,
this provides
another proof that almost all trajectories are dense.

The proof of Theorem~\ref{Thm:ergodic}
is given in Section~\ref{sec:random}.
To construct the invariant measure,
we study the Markov chain associated
with the random walk from Eq.~\eqref{eq:def-RW}
through its adjoint action on functions, 
defined by
$$
\phi_n(x) = E_x(\phi(X_n))\,,
$$
where $x$ is the starting point of the random walk.
Under the assumptions of Theorem~\ref{Thm:ergodic},
we show that for each continuous function $\phi$
on the sphere, the sequence $\phi_n$ converges
uniformly to a constant $\bar \phi$ 
(Proposition~\ref{prop:ergodic}). 
Writing $\bar\phi = \int \phi\, d\rho$ for a suitable
measure $\rho$ on the sphere, we
show that $\rho$ is uniquely determined
by the map $\phi\mapsto \bar\phi$, and invariant under the
transformation $T_\mu\#$. Since $\bar\phi$ lies strictly between
the maximum and minimum of $\phi$ unless $\phi$ is constant,
we find that $\rho(A)>0$ for every non-empty open set $A$.

In Section~\ref{sec:inv-meas}, we
consider the
relationship between the invariant measure
$\rho$ and the uniform 
probability measure $\sigma$ on $\SS^{d-1}$.
If the directions $(U_n)$ are
uniformly distributed on the sphere, then $\rho$ is uniform as well.
In that case, Corollary~\ref{cor:Birkhoff} implies
that the random trajectories are almost surely
equidistributed on the sphere.  More generally,
the uniform measure is invariant if and only
if the distribution of $U_n$ is
even under the antipodal map $u\mapsto -u$
(Proposition~\ref{prop:even}).
It is an open question whether
the invariant measure
is always mutually absolutely continuous with respect to
the uniform measure.

\section{Motivation and related work}
\label{sec:related}

Symmetrizations are equimeasurable 
rearrangements of sets and functions which are commonly 
used for proving that
certain optimization problems have radially symmetric
solutions.  For instance, the sharp
constants in the Young, Sobolev,
and Hardy-Littlewood-Sobolev inequalities are
assumed among radial functions, the ground state of the
hydrogen atom is symmetric decreasing, and the membrane 
of given area with the lowest fundamental frequency
is shaped like a disk.

A standard technique for
establishing geometric inequalities
is to approximate full radial symmetrization 
by a sequence of simpler symmetrizations.
Two-point symmetrization has been used in this way
to prove the isoperimetric inequality on spheres~\cite{Ben},
and sharp inequalities for path integrals~\cite{BS,Mor,PS}. 
The convergence of random sequences of 
symmetrizations has received some attention in the
literature, most notably in the work of
Klartag~\cite{K} on Steiner symmetrizations of
convex sets. Convergence of two-point
symmetrizations is less well studied. Very
recently, De Keyser and Van Schaftingen have
considered random symmetrization processes
with time correlations~\cite{dKvS}. Open
questions in the area include precise conditions 
for convergence, and bounds on the rate of convergence.

To explain the relationship with
our results, let $\phi$ be a nonnegative continuous
function with compact support on $\RR^d$.
Consider a sequence $(\phi_n)$ of functions obtained from $\phi$
by iterated Steiner symmetrization according to
a sequence of directions. Any limit point $\psi$ of
$(\phi_n)$ has at least some reflection
symmetries, that is, $\psi\circ R_u=\psi$ for all
$u$ in a non-empty set of directions $G\subset\SS^{d-1}$.
If $G$, which is determined by the sequence of Steiner symmetrizations,
satisfies (C2), it follows that $\psi$ is radial.
Then one can identify 
$\psi$ as the symmetric decreasing rearrangement of $\phi$ 
and conclude that the entire sequence of symmetrizations 
converges to $\psi$~\cite[Corollary 2.3b]{BF}.
This provides a sharp condition for an i.i.d. 
sequence of Steiner symmetrizations
to converge to the symmetric decreasing rearrangement.

If, instead, $\phi_n$ is obtained
by iterated two-point symmetrization
at hyperplanes in $\RR^d$ that
do not contain the origin, then
any limit point $\psi$ of the sequence
has the property that $S_u\psi=\psi$, that
is, $\psi\circ F_u\ge \psi$ 
for all $u$ in a some non-empty set of directions $G\subset\SS^{d-1}$.
If $G$ satisfies (C1) and (C2), then according to
Corollary~\ref{cor:radial}, the function $\psi$ is radial.
In combination with~\cite[Theorem 2.2]{BF} 
this yields a sharp condition for the convergence of 
i.i.d. sequences of two-point symmetrizations, the first
result of this type.
Our hope is that a full analysis of the random walk 
defined in Eq.~\eqref{eq:def-RW}
would shed light on the best achievable
rate of convergence. We note that
for non-convex sets, the
strongest known lower bound on the rate of
convergence of Steiner symmetrizations was proved
by comparison with two-point 
symmetrization~\cite[Corollary 5.4]{BF}.

There are many known results analogous to Theorem~\ref{Thm:dense}
for other collections of maps associated with
linear isometries of spheres.  For example, 
Crouch and Silva Leite~\cite{Crouch-Leite} found 
pairs of one-parameter subgroups of $SO(d)$ 
which 
generate all of $SO(d)$, and
produced upper bounds on the number of elements from 
each subgroup required to generate any element
(see also Levitt and Sussmann~\cite{LS} for related results).
An example of this phenomenon
is the Euler angles decomposition, a formula for writing 
any element of $SO(3)$ as a product of three rotations 
about the $x$ and $y$ axes. Rosenthal~\cite{Rosenthal} has 
established bounds on the rate of convergence to the steady state
for random walks generated by conjugacy classes of planar rotations
on $SO(d)$.  Porod~\cite{P} has obtained analogous results
for random walks on $O(d)$, $U(d)$ and $Sp(d)$
generated by the conjugacy class of reflections.

All of the above results can be translated 
into statements about trajectories on
$\SS^{d-1}$ via the standard actions of the groups.
Historically, they were preceded by results on the symmetric
group $S_n$, with transpositions playing the role
of reflections.
Dixon~\cite{Dixon} proved that the probability that 
two randomly selected elements of the symmetric group 
$S_n$ generate the whole group approaches $3/4$ as 
$n \rightarrow \infty$. Diaconis and 
Shahshahani~\cite{DS} studied random walks
on the symmetric group generated by random transpositions
and developed methods for estimating the rate of convergence.

Since a folding map agrees with the identity map on
one half-space, and with a reflection on its complement,
it is a piecewise isometry.  The dynamics
of piecewise isometries have been studied in various contexts by
numerous authors, for example see the survey by Goetz~\cite{G}.
Different from interval exchange maps, our folding maps are neither one-to-one
nor onto.

On an abstract level, our results are motivated by
classical theorems about directed graphs, and more generally
Markov chains on discrete state spaces.
A graph is {\bf transitive} if every vertex $x$ can be connected
to every other vertex $y$ by a path of directed edges.
Finding a minimal subset of edges that still
connects all vertices is known to be a hard problem, 
particularly if the graph has cycles.
As a practical alternative,
Aho, Garey, and Ullmann developed the theory of
transitive reductions,
which are graphs on the same vertex set
with the smallest possible number of edges~\cite{AGU}. 
In our setting, the points $x\in \SS^{d-1}$ play the role of 
vertices, and a pair $(x,u)$ with $u\in G$ plays the role of 
a directed edge connecting $x$ to $F_ux$. 
Edge paths correspond to
trajectories, and transitivity means that
all trajectories are dense.
Theorem~\ref{Thm:dense} gives necessary and sufficient conditions
for transitivity, Proposition~\ref{prop:trans-reduction}
constructs a transitive reduction,  and
Proposition~\ref{prop:periodic}
establishes the presence of cycles. 
The random walk defined in Eq.~\eqref{eq:def-RW}
corresponds to the Markov chain defined by
a weighted directed graph,
with $\mu$ playing the role of the edge weights.
Theorem~\ref{Thm:ergodic} and Corollary~\ref{cor:Birkhoff}
concern the steady-state of the Markov chain.

Another parallel can be drawn between 
Eq.~\eqref{eq:def-RW} and shift dynamics.
Suppose that the probability density $\mu$ is
concentrated on a finite set $G$
of directions.  For a given initial point,
$x\in\SS^{d-1}$,
if we omit from the underlying sequence
of directions those terms where the folding map
acts as the identity (rather than by 
reflection),
we obtain a random sequence in $G$
where only certain transitions are admissible.
The conditions for admissibility,
however, depend on the current
location of the random walk in $\SS^{d-1}$; 
for this reason the sequence of 
directions is not simply a subshift of finite type.

\section{Conditions (C1) and (C2)}
\label{sec:C1C2}

We now discuss the hypotheses of the main theorems.
The geometric condition (C1) can be expressed in a number of 
different forms.  Note that only parts (c) and (d) of 
Proposition~\ref{prop:cover} are needed subsequently.

\begin{prop}
\label{prop:cover}
Let $G\subset \SS^{d-1}$.
The following are equivalent:
\begin{enumerate}
\item [(a)]
The open half-spaces $H_u$ with $u\in G$ cover $\SS^{d-1}$;
\item [(b)] $G$ is not contained in any closed hemisphere
of $\SS^{d-1}$;
\item [(c)]the convex hull of $G$ in $\RR^d$
contains the origin in its interior.
\end{enumerate}
Let $G$ be the support of a Borel probability measure $\mu$.
Then the above conditions hold if and only if
\begin{itemize}
\item[(d)] $0<\mu(H_x)<1$ for all $x\in\SS^{d-1}$.
\end{itemize}
\end{prop}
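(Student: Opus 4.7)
The plan is to establish a short cycle of implications (a) $\Leftrightarrow$ (b) $\Leftrightarrow$ (c), and then separately prove (a) $\Leftrightarrow$ (d) under the extra hypothesis on $\mu$. Each equivalence is essentially a reformulation, so the proof is organized as a sequence of dualizations rather than a linear chain.

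First I would unpack (a). The family $\{H_u\mid u\in G\}$ covers $\SS^{d-1}$ if and only if, for every $x\in\SS^{d-1}$, some $u\in G$ satisfies $x\cdot u>0$. Taking the contrapositive, the failure of (a) means the existence of a point $x$ such that every $u\in G$ lies in the closed hemisphere $\{u\in\SS^{d-1}\mid u\cdot x\le 0\}$, which is exactly the failure of (b). This gives (a) $\Leftrightarrow$ (b) by direct negation.

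Next I would handle (b) $\Leftrightarrow$ (c) using elementary convex geometry. If $G$ sits in a closed hemisphere $\{u\cdot x\le 0\}$, then the convex hull of $G$ lies in the closed half-space $\{y\cdot x\le 0\}$, so the origin, belonging to the boundary hyperplane, cannot lie in the interior of $\mathrm{conv}(G)$; so (b) fails implies (c) fails. For the converse, suppose the origin is not in the interior of $\mathrm{conv}(G)$. If $0\notin\mathrm{conv}(G)$, the hyperplane separation theorem produces $x\in\SS^{d-1}$ with $y\cdot x>0$ for all $y\in\mathrm{conv}(G)$, so $G$ is in an open hemisphere. If $0\in\partial\,\mathrm{conv}(G)$ and $\mathrm{conv}(G)$ has non-empty interior, a supporting hyperplane at the origin yields $x$ with $G\subset\{u\cdot x\ge 0\}$. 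If $\mathrm{conv}(G)$ has empty interior, it lies in an affine hyperplane $H$; when $0\in H$ we conclude $G\subset H$, a closed hemisphere's boundary; when $0\notin H$ we rescale the normal to $H$ so that $G$ sits in an open half-space. In every case, $G$ is contained in some closed hemisphere.

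For (a) $\Leftrightarrow$ (d) under the additional hypothesis that $G=\mathrm{supp}(\mu)$, I would use the basic fact that for a regular Borel measure, an open set has positive mass precisely when it meets the support. Since $H_x$ is open, $\mu(H_x)>0$ is equivalent to $G\cap H_x\ne\emptyset$, i.e., to the existence of $u\in G$ with $x\in H_u$. So requiring $\mu(H_x)>0$ for every $x$ is just (a). The upper bound $\mu(H_x)<1$ is the same requirement applied to $-x$: indeed, $H_{-x}\subset\SS^{d-1}\setminus H_x$ is open and disjoint from $H_x$, so $\mu(H_x)<1$ follows from $\mu(H_{-x})>0$; and conversely any antipodal version of (a) is built into (a) itself since it must hold for all $x$.

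There is no real obstacle here beyond keeping track of open versus closed hemispheres. The only step requiring a little care is (b) $\Leftrightarrow$ (c) in the degenerate case when $\mathrm{conv}(G)$ has empty interior, where one must distinguish whether the affine hull of $G$ passes through the origin; I flagged this explicitly above. Everything else is routine convex separation and the standard characterization of the support of a regular Borel measure.
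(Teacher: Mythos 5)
Your proposal is correct and reaches all four equivalences, but it is organized differently from the paper's argument, so a comparison is worth recording. The paper proves a cycle $(a)\Rightarrow(b)\Rightarrow(c)\Rightarrow(a)$, and the only nontrivial convexity input is in $(c)\Rightarrow(a)$: one maximizes the linear functional $\ell(y)=x\cdot y$ over the convex hull and uses that the maximum is attained at an extreme point lying in the closure of $G$. You avoid that extreme-point argument entirely: you note that $(a)$ and $(b)$ are literally negations of the same statement (``$G$ lies in some closed hemisphere''), and you prove $(b)\Leftrightarrow(c)$ in both directions, the easy direction being that a closed hemisphere containing $G$ puts $\mathrm{conv}(G)$ in a closed half-space through the origin, and the substantive direction $(\neg c\Rightarrow\neg b)$ being a case analysis via separating and supporting hyperplanes (origin outside the hull, origin on its boundary, hull with empty interior). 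What your route buys is economy of tools --- no appeal to extreme points --- at the price of carrying out carefully the case analysis that the paper compresses into one sentence in its $(b)\Rightarrow(c)$ step. One small inaccuracy to fix: in the case $0\notin\mathrm{conv}(G)$ you claim \emph{strict} separation, $y\cdot x>0$ for all $y\in\mathrm{conv}(G)$; since $\mathrm{conv}(G)$ need not be closed, the origin may lie in its closure and strict separation can fail (for $d=2$ take $G=\{(\cos\theta,\sin\theta)\mid 0<\theta<\pi\}\cup\{(-1,0)\}$: then $0\notin\mathrm{conv}(G)$, yet no $x$ has $u\cdot x>0$ for every $u\in G$). This is harmless, because all you need there is non-strict separation, $G\subset\{u\mid u\cdot x\ge 0\}$, which the general separation theorem provides --- or simply note that this situation has $0\in\partial\,\mathrm{conv}(G)$ and is already covered by your supporting-hyperplane case. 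Your treatment of $(a)\Leftrightarrow(d)$ is essentially identical to the paper's: $\mu(H_x)>0$ because $H_x$ is open and meets the support, and $\mu(H_x)<1$ because $H_{-x}$ is disjoint from $H_x$ and has positive measure, the converse requiring only the lower bound.
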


\begin{proof} $(a)\Rightarrow (b)$: If $G$ satisfies (a), then
for every
$x\in \SS^{d-1}$, there is a direction $u\in G$
such that $-x\in H_u$. This means that $x\cdot u<0$, implying that
$G$ is not contained in the
closed hemisphere $\{u\in\SS^{d-1}\mid x \cdot u\ge 0\}$.
Since $x$ was arbitrary, this shows~(b).

$(b)\Rightarrow(c)$:
If (c) does not hold, then there is a hyperplane
through the origin that does not meet the interior
of the convex hull of $G$. Therefore, $G$ is contained
in a closed half-space, which intersects $\SS^{d-1}$ in a closed
hemisphere, contradicting (b).

$(c)\Rightarrow(a)$: Let $C$ be the convex hull of $G$ in $\RR^d$.
Given a point
$x$ in $\SS^{d-1}$, consider
the linear functional defined by $\ell(y)=x\cdot y$
on $\RR^d$. If the origin is an interior
point of $C$,
then $\ell$ takes both positive and negative values
on $C$.
Since $\ell$ assumes its maximum at an extreme point of $C$,
and the extreme points of $C$ are contained in the closure
of $G$, we conclude that $\ell(u)>0$ for at least
one $u\in G$. Thus $x\in H_u$, establishing (a).

$(a)\Leftrightarrow(d)$: Let $\mu$ be a Borel probability measure on
$\SS^{d-1}$, and $x\in \SS^{d-1}$.
Assuming (a), the support of $\mu$ contains a direction
$u$ with $x\in H_u$, that is, $u\in H_x$.
Since $H_x$ is open, it follows that $\mu(H_x)>0$,
proving the first inequality in~(d).
Replacing $x$ with $-x$ shows that
$1-\mu(H_x) \ge \mu(H_{-x}) >0$, proving the
second inequality in (d).
The converse implication is obvious.
\end{proof}

Denote by
$$
\langle G\rangle :=
\left\{ R_{u_n}\dots R_{u_1}\in O(d)
\ \vert\ n\ge 0, u_1,\dots, u_n\in G\right\}
$$
the subgroup generated by the reflections $R_u$ 
at hyperplanes with unit normals $u\in G$.
The algebraic condition (C2) can fail in three ways:
If $\langle G\rangle$
lies in a lower-dimensional subgroup, if it splits into
two subgroups that act on orthogonal
subspaces, or if $\langle G\rangle$ is
a finite Coxeter subgroup of $O(d)$.
Thus, assumptions (1) and (2) in the
following proposition are also necessary.
In dimension $d>2$, assumption (3) is stronger than
necessary, because only integer multiples
of $\frac\pi 3$, $\frac\pi 4$, and $\frac \pi 5$
can appear as angles between
elements of a finite 
Coxeter subgroup that acts on $\RR^d$~\cite[Theorem~9 and 
Proof of Lemma 4.2]{Coxeter}. 
For $(d+1)$-element sets $G\subset\SS^d$, a precise condition
for generating finite Coxeter groups 
was obtained by Felikson~\cite{F}; for more general sets of 
reflections in $\RR^d$ this is an open problem.
A set of conditions slightly stronger than (1), (2), and (3) 
was used by Eggleston~\cite[proof of Theorem~46]{E}
and Klain~\cite[after Corollary 5.4]{Klain}
to establish convergence to balls for
sequences of Steiner symmetrizations.

\begin{prop}
\label{prop:transitive}
Let $G\subset\SS^{d-1}$. If
(1) $G$ spans $\RR^d$,
(2) $G$ is not a union of
two non-empty mutually orthogonal subsets,  and
(3) not all angles between directions are
commensurable with $\pi$,
then the subgroup $\langle G \rangle$ is dense in $O(d)$.
\end{prop}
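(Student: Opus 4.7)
The plan is to study the closure $H := \overline{\langle G\rangle}$ as a closed Lie subgroup of $O(d)$, and to show that the Lie algebra $\mathfrak{h}$ of its identity component $H_0$ is all of $\mathfrak{so}(d)$. Once this is in hand, $H_0 = SO(d)$, and since $H$ contains the improper element $R_u$ for each $u\in G$, one concludes $H=O(d)$.

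The first step uses only hypothesis~(3): pick $u_0,v_0\in G$ whose angle $\theta$ is incommensurable with $\pi$, and observe that $(R_{u_0}R_{v_0})^n$ is a rotation by $2n\theta$ in the plane $P_0:=\mathrm{span}(u_0,v_0)$ fixing $P_0^\perp$ pointwise. The angles $2n\theta$ are equidistributed modulo $2\pi$, so closure gives $SO(P_0)\subseteq H$; this subgroup is connected, hence sits inside $H_0$, and therefore $\mathfrak{so}(P_0)\subseteq\mathfrak{h}$.

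The heart of the argument is a Lie-algebraic propagation scheme. I would introduce the family $\mathcal{W}$ of linear subspaces $W\subseteq\RR^d$ with $P_0\subseteq W$ and $\mathfrak{so}(W)\subseteq\mathfrak{h}$, and prove the closure property: if $W_1,W_2\in\mathcal{W}$ and $W_1\cap W_2\neq 0$, then $W_1+W_2\in\mathcal{W}$. Choosing a unit vector $e\in W_1\cap W_2$ and decomposing $W_i=\RR e\oplus W_i'$ with $W_i'\subseteq e^\perp$, the identity $[e\wedge f,\,e\wedge g]=-f\wedge g$ (valid for mutually orthogonal unit vectors $e,f,g$, where $a\wedge b:=ab^T-ba^T$) shows that the Lie subalgebra of $\mathfrak{so}(W_1+W_2)$ generated by $\mathfrak{so}(W_1)$ and $\mathfrak{so}(W_2)$ is all of $\mathfrak{so}(W_1+W_2)$. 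The family $\mathcal{W}$ therefore possesses a unique maximal element $W^*$.

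To finish, I would show $W^*=\RR^d$. Since $H_0\triangleleft H$, conjugation by $R_w$ preserves $\mathfrak{h}$ for each $w\in G$, so $\mathfrak{so}(R_wW^*)=R_w\mathfrak{so}(W^*)R_w^{-1}\subseteq\mathfrak{h}$ and thus $R_wW^*\in\mathcal{W}$. A dimension count gives $\dim(W^*\cap w^\perp)\ge\dim W^*-1\ge1$, and every such vector lies in $W^*\cap R_wW^*$; maximality of $W^*$ then forces $R_wW^*=W^*$, which in turn forces $w\in W^*\cup(W^*)^\perp$. This expresses $G$ as the union of two mutually orthogonal subsets $G\cap W^*$ and $G\cap(W^*)^\perp$; the former is non-empty since it contains $u_0,v_0$, so by~(2) the latter must be empty, whence $G\subseteq W^*$ and, by~(1), $W^*=\mathrm{span}\,G=\RR^d$. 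The main obstacle I anticipate is the Lie-bracket generation claim in the third paragraph: one must track carefully that ad-invariance of $\mathfrak{h}$ under the full group $\langle G\rangle$ follows from normality of $H_0$ in $H$, so that the reflections $R_w$ really do act on $\mathfrak{h}$.
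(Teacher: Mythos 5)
Your proof is correct, and it takes a genuinely different route from the paper's. The paper argues by induction on dimension: condition (3) settles $d=2$ via an irrational rotation, and for $d>2$ it extracts a subset $G'\subset G$ spanning a hyperplane $v^\perp$ that still satisfies (2) and (3), so that by induction $\overline{\langle G'\rangle}$ is the full orthogonal group of $v^\perp$; a direction $u\in G$ not orthogonal to $G'$ then yields two distinct conjugate copies of $SO(d\!-\!1)$ inside $\overline{\langle G\rangle}$, and the Montgomery--Samelson lemma (no proper compact subgroup of $SO(d)$ properly contains a copy of $SO(d\!-\!1)$) finishes the argument. You instead work at the Lie-algebra level of the closed subgroup $H=\overline{\langle G\rangle}$: the irrational rotation gives $\mathfrak{so}(P_0)\subseteq\mathfrak h$, the bracket identity $[e\wedge f,e\wedge g]=-f\wedge g$ (which in fact holds for any $f,g\perp e$ with $e$ a unit vector, orthogonality of $f$ and $g$ is not needed, which is exactly what your application to non-orthogonal $W_1',W_2'$ requires) propagates $\mathfrak{so}(W)\subseteq\mathfrak h$ across subspaces with non-trivial intersection, and hypotheses (1)--(2) enter only to force the maximal subspace $W^*$ to be all of $\RR^d$, using $\mathrm{Ad}$-invariance of $\mathfrak h$ under each $R_w$, which does follow from normality of $H_0$ in $H$ as you anticipated. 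What your route buys is the avoidance of both the induction (in particular the somewhat delicate extraction of a hyperplane-spanning $G'$ satisfying (2) and (3)) and the external group-theoretic lemma, at the price of Cartan's closed-subgroup theorem and basic Lie theory. One small repair in the endgame: as you defined $\mathcal W$, the claim $R_wW^*\in\mathcal W$ needs $P_0\subseteq R_wW^*$, which you have not established; but this is immaterial, since you can apply the generation step directly to the pair $W^*$ and $R_wW^*$ (they intersect non-trivially because $\dim(W^*\cap w^\perp)\ge 1$, and both have $\mathfrak{so}(\cdot)\subseteq\mathfrak h$), conclude $W^*+R_wW^*\in\mathcal W$, and then maximality gives $R_wW^*\subseteq W^*$, hence equality by dimension; the remaining steps ($w\in W^*\cup(W^*)^\perp$, then (2) forces $G\subset W^*$ and (1) forces $W^*=\RR^d$) are fine.
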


\begin{proof} We argue by induction over the dimension.
For $d=1$ there is nothing to show. For
$d=2$, by (1) and (3) there are directions
$u,v\in G$ such that the angle 
$d(u,v)$ is incommensurable with $\pi$. 
Since the composition
$R_uR_v$ generates a dense subgroup of rotations in 
$SO(2)$, it follows that $\langle\{u,v\}\rangle$ is dense
in $O(2)$.

Suppose now that the proposition holds in dimension $d-1$, where
$d>2$.  If $G\subset \SS^{d-1}$ satisfies (1), (2), and (3),
then it contains a subset $G'$ that spans some
hyperplane $v^\perp$ in $\RR^d$ and also satisfies (2) and (3).
Let $S_v$ be the orthogonal group on $v^\perp$.
By the inductive hypothesis, $\langle G'\rangle $
is dense in $S_v$. Note that $S_v$ has a unique pair
of fixed points at $\pm v$, and
acts transitively on the unit sphere in $v^\perp$.

By (1) and (2) there is a direction
$u\in G$ that is linearly independent but not orthogonal to $G'$.
In particular, $w=R_uv$ is  linearly independent of $v$.
Therefore the conjugate subgroup
$S_w=R_u S_vR_u^{-1}$ is different from $S_v$.
Intersecting $S_v$ and $S_w$ with $SO(d)$, we
obtain two distinct subgroups
conjugate to $SO(d\!-\!1)\times \{1\}$ in $\overline{\langle G\rangle}$.
But $SO(d)$ has no non-trivial compact subgroup
that properly contains a copy of 
$SO(d\!-\!1)$~\cite[Lemma 4]{MS}.
It follows that $\overline{\langle G\rangle}$ contains $SO(d)$.
Since it also contains orientation-reversing elements,
$\overline{\langle G\rangle}=O(d)$.
\end{proof}

Our next results concern finite subsets of $\SS^{d-1}$.

\begin{prop}
\label{prop:trans-reduction} 
The minimal number of directions in $G\subset\SS^{d-1}$ required
to satisfy (C1) and (C2) 
is $d+1$.
\end{prop}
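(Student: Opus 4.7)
My plan is to prove the matching lower and upper bounds separately. For the lower bound $|G|\ge d+1$: if $G\subset\SS^{d-1}$ satisfies (C1), then Proposition~\ref{prop:cover}(c) places the origin in the interior of the convex hull of $G$ in $\RR^d$. A convex subset of $\RR^d$ with non-empty interior must affinely span $\RR^d$, so it contains at least $d+1$ affinely independent points; hence $|G|\ge d+1$.

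For the matching upper bound, the case $d=1$ is trivial: $G=\{1,-1\}\subset\SS^0$ satisfies both conditions. For $d\ge 2$ I exhibit the explicit set $G=\{e_1,\dots,e_d,u\}$, where $e_1,\dots,e_d$ is the standard basis of $\RR^d$ and $u=(u_1,\dots,u_d)\in\SS^{d-1}$ is chosen with every $u_i<0$ and with $\arccos(-u_1)$ incommensurable with $\pi$ (for instance $u_1=-\cos 1$ and $u_2=\cdots=u_d=-\sin 1/\sqrt{d-1}$, which gives a unit vector since $\cos^2 1+\sin^2 1=1$).

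To verify (C1), observe that $\sum_i u_i<0\ne 1$ forces $u$ out of the affine hull of $\{e_1,\dots,e_d\}$, so the $d+1$ points are affinely independent. Setting $\lambda_{d+1}=1/(1-\sum_i u_i)>0$ and $\lambda_i=-\lambda_{d+1}u_i>0$ for $i\le d$ writes the origin as a strict positive convex combination of $G$, placing it in the interior of $\operatorname{conv}(G)$; Proposition~\ref{prop:cover}(c) then delivers (C1). For (C2) I apply Proposition~\ref{prop:transitive}: spanning is immediate from the basis being contained in $G$; any orthogonal partition $G=A\sqcup B$ with $u\in A$ and $B\ne\emptyset$ would force some $e_i\in B$ to satisfy $u_i=e_i\cdot u=0$, contradicting $u_i<0$; and the angle $d(e_1,u)=\pi-\arccos(-u_1)$ is incommensurable with $\pi$ by construction. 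The single delicate point---and essentially the only obstacle---is arranging a single extra vector $u$ that simultaneously respects strict negativity of coordinates, non-orthogonality with each $e_i$, and an irrational angle; the explicit choice above shows these three constraints to be compatible, so no genuine difficulty arises.
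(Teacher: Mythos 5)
Your proof is correct and follows essentially the same route as the paper: the lower bound comes from Proposition~\ref{prop:cover}(c) forcing the origin into the interior of the convex hull, and the upper bound takes a basis plus one extra direction whose antipode lies in the interior of the cone spanned by the basis, with an angle incommensurable with $\pi$, verified via Propositions~\ref{prop:cover}(c) and~\ref{prop:transitive}. Your only additions are making the extra vector fully explicit and treating $d=1$ separately, which changes nothing substantive.
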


\begin{proof}
Condition (C1) requires
at least $d+1$ points, the minimal number of
extreme points for a convex set with interior in $\RR^d$.

To construct a
set of $d+1$ points that satisfies
both (C1) and (C2), we start from a basis 
of unit vectors $u_1,\dots, u_d$
for $\RR^d$. The convex cone 
generated by this basis has non-empty interior. 
We choose a unit vector $u_{d+1}$ whose antipode $-u_{d+1}$ lies 
in the interior of the cone, and, moreover, $u_{d+1}$ encloses an 
angle with $u_1$ that is not a rational
multiple of $\pi$, and set
$G=\{u_1,\dots, u_{d+1}\}$.
Then (C1) holds by
Proposition~\ref{prop:cover}(c),
and (C2) by Proposition~\ref{prop:transitive}.
\end{proof}

We do not have a good characterization
of minimal subsets (under inclusion) satisfying
(C1) and (C2). Such minimal subsets may
contain more than $d+1$ directions.
For example, both conditions hold for
$G=\{u_1,-u_1, \dots, u_d, -u_d\}$,
where $u_1$ and $u_2$ enclose an angle incommensurable with $\pi$
and $u_1,\dots, u_d$ form a basis of unit vectors in $\RR^d$,
but every proper subset is contained in a 
closed hemisphere and thus fails (C1). 

There are sets in $S^1$
satisfying (C1) and (C2) that have
no minimal subsets.  For example, the set
$G=\{ \pm e^{i\pi/k}\mid k\ge 1\}\subset\SS^1$ satisfies
(C1) and (C2), and so does every infinite even subset,
while every finite
subset generates a finite dihedral 
subgroup of $O(1)$. Our next result shows that
in higher dimensions, minimal subsets always exist, 
and are finite.

\begin{prop}\label{prop:min-equivalent}
Let $d>2$. If $G\subset\SS^{d-1}$
satisfies (C1) and (C2),
then there is a finite subset $G'\subset G$ that also
satisfies both conditions.
\end{prop}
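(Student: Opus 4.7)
My plan is to find finite $G_1,G_2\subseteq G$ witnessing (C1) and (C2) separately, and then set $G'=G_1\cup G_2$. The first half is immediate: by Proposition~\ref{prop:cover}(a) the open half-spaces $\{H_u : u\in G\}$ cover the compact sphere $\SS^{d-1}$, so any finite subcover yields a finite $G_1$ satisfying (C1).

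For (C2), I would study the function $D(F):=\dim\overline{\langle F\rangle}^0$ defined on finite subsets $F\subseteq G$. It is bounded by $\binom{d}{2}=\dim SO(d)$ and nondecreasing under inclusion, so it achieves its maximum at some finite $G_2\subseteq G$; set $L:=\overline{\langle G_2\rangle}^0$. The key structural observation is that $L$ must be normal in $O(d)$. Indeed, for any $u\in G$, maximality of $D(G_2)$ combined with the fact that a connected closed Lie subgroup of $O(d)$ containing $L$ and of the same dimension must equal $L$ gives $\overline{\langle G_2\cup\{u\}\rangle}^0=L$; consequently $R_u$ lies in a closed subgroup whose identity component is $L$, so $R_u$ normalizes $L$. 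Thus $\langle G\rangle\subseteq N_{O(d)}(L)$, and since the normalizer is closed and $\langle G\rangle$ is dense in $O(d)$ by (C2), we conclude that $L$ is normal in $O(d)$. For $d>2$, the only connected subgroups of $O(d)$ contained in $SO(d)$ that are normal in $O(d)$ are $\{e\}$ and $SO(d)$: the Lie algebra $\mathfrak{so}(d)$ is simple for $d\neq 4$, while for $d=4$ the two simple summands of $\mathfrak{so}(4)=\mathfrak{so}(3)\oplus\mathfrak{so}(3)$ are interchanged by the outer automorphism induced by any reflection.

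The main obstacle is excluding the possibility $L=\{e\}$; this is where the hypothesis $d>2$ enters a second, decisive time. If $L=\{e\}$, every finite $F\subseteq G$ generates a discrete and hence finite subgroup, so $\langle G\rangle$ is locally finite, equivalently a torsion subgroup of $GL_d(\mathbb{C})$. The Jordan--Schur theorem then furnishes an abelian normal subgroup $A\leq\langle G\rangle$ of index bounded by some constant $j(d)$. Taking closures inside the compact group $O(d)$, the closure $\overline{A}$ is a closed abelian subgroup, and $O(d)=\overline{\langle G\rangle}$ is a finite union of at most $j(d)$ cosets of $\overline{A}$; comparing dimensions forces $\dim\overline{A}=\dim O(d)=\binom{d}{2}$. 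But any closed abelian subgroup of $O(d)$ has dimension at most $\lfloor d/2\rfloor$ (the dimension of a maximal torus of $SO(d)$), and $\lfloor d/2\rfloor<\binom{d}{2}$ precisely when $d>2$---exactly where the two-dimensional counterexample $G=\{\pm e^{i\pi/k}:k\geq 1\}\subset\SS^1$ noted before the proposition obstructs the same line of argument. We conclude $L=SO(d)$, so $\overline{\langle G_2\rangle}$ contains $SO(d)$ together with an orientation-reversing reflection $R_u$ for any $u\in G_2$ (if $G_2$ happens to be empty, adjoin one element of $G$), and therefore equals $O(d)$. Taking $G'=G_1\cup G_2$ gives a finite subset of $G$ satisfying both conditions.
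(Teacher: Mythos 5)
Your proof is correct, but it takes a genuinely different route from the paper's. For (C1) you pass to a finite subcover of the compact sphere, while the paper inscribes a small simplex around the origin in the convex hull of $G$ and uses Krein--Milman; these are interchangeable. The real divergence is in (C2). The paper augments its (C1)-witness by at most two further elements of $G$, chosen so that the resulting set does not split into mutually orthogonal parts and contains a pair of directions whose angle is not an integer multiple of $\pi/3$, $\pi/4$, $\pi/5$; it then concludes (C2) from Coxeter's theorem on admissible angles in finite reflection groups, together with the (classification-type) fact that for a spanning, irreducible set of reflections in dimension $d>2$ the only way (C2) can fail is through a finite Coxeter group, and with the implicit point that such auxiliary directions can be found inside $G$. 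You instead maximize $\dim \overline{\langle F\rangle}^0$ over finite $F\subset G$, note that maximality forces every $R_u$, $u\in G$, to normalize the maximizing identity component $L$, so by density $L\trianglelefteq O(d)$, whence $L=\{e\}$ or $L=SO(d)$ for $d>2$ (simplicity of $\mathfrak{so}(d)$ for $d\neq 4$, and for $d=4$ the swap of the two ideals under conjugation by a reflection --- all correct); and you exclude $L=\{e\}$ because $\langle G\rangle$ would then be locally finite, which via Jordan--Schur and a dimension count contradicts density when $d>2$. What each approach buys: the paper's proof is short and yields a very economical $G'$ (a (C1)-witness plus at most two directions), but it leans on facts about finite Coxeter groups; your proof is longer and invokes Jordan--Schur, but it is independent of the Coxeter classification, works verbatim for any $G$ satisfying the hypotheses, and cleanly isolates where $d>2$ enters. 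Two minor streamlinings: once $O(d)$ is covered by finitely many cosets of the closed abelian group $\overline{A}$, some coset has interior, so $\overline{A}$ is open and contains $SO(d)$, which is non-abelian for $d\ge 3$ --- no maximal-torus bound is needed; and the parenthetical about $G_2$ being empty is vacuous, since $L=SO(d)$ already forces $\langle G_2\rangle$ to be nontrivial.
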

\begin{proof} If $G$ is finite, there is nothing to show.
If $G$ is infinite, we choose a small $d$-dimensional 
simplex  inside the convex hull of $G$ in $\RR^d$
that contains
the origin in its interior.  By the Krein-Milman 
theorem, each vertex  of the simplex is a convex
combination of a finite subset of $G$.
The convex hull of the union of these subsets 
contains the origin in its interior.
By Proposition~\ref{prop:cover}(c), 
it satisfies condition (C1).  To obtain
$G'$, we include up to two more
elements of $G$ in the union in such a way that
$G'$ does not split into mutually orthogonal
subsets, and that it contains a pair
of vectors enclosing an angle that is not an integer
multiple of $\frac\pi 3$, $\frac \pi 4$, or $\frac \pi 5$.
In dimension $d>2$, this implies that
$G'$ is not contained in a finite Coxeter group.
Thus (C1) and (C2) both hold for~$G'$.
\end{proof}

\section{Invariant subsets and dense trajectories}
\label{sec:dense}

This section is dedicated to the proof of Theorem~\ref{Thm:dense} and
Corollary~\ref{cor:radial}. 
Let $F_u$ and $R_u$ be the folding map
and the reflection defined by a direction
$u\in\SS^{d-1}$.
By definition, a set $A\subset\RR^d$
is {\bf positively invariant} under $F_u$ if
$$
F_u(A)\subset A\,;
$$
if $R_uA=A$ then $A$ is {\bf invariant} under $R_u$.
We say that $A$ is almost positively invariant 
if $F_uA\setminus A$ is a null set with respect to
the uniform probability measure $\sigma$;
if the symmetric difference $A \bigtriangleup R_uA$
is a null set then $A$ is almost invariant.

We consider sets that are positively invariant under
the folding maps indexed by a non-empty
set of directions $G\subset\SS^{d-1}$. 
Under condition (C1),
positively invariant subsets are almost invariant.

\begin{lem}\label{lem:inv}
Let $G\subset \SS^{d-1}$ be a subset that is not
contained in any closed hemisphere.
If $A\subset \SS^{d-1}$ is almost
positively invariant under $F_u$ for all $u\in G$, then
it is almost invariant under $R_u$ for all $u\in G$.
\end{lem}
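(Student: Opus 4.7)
The plan is to track the Euclidean center of mass $c_A = \int_A x\, d\sigma(x) \in \RR^d$, use almost positive invariance to show that $c_A \cdot u \ge 0$ for every $u \in G$, and then invoke condition (C1) via a dual-cone argument to collapse this into an equality which forces almost invariance under each reflection.

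First, fix $u \in G$ and write $A^\pm = A \cap H_{\pm u}$. Almost positive invariance of $A$ under $F_u$ is equivalent to $R_u A^- \subset A^+$ modulo null sets, so one can decompose $A^+ = R_u A^- \cup B_u$ disjointly (mod null), where $B_u := A^+ \setminus R_u A^-$. The target conclusion $R_u A = A$ (mod null) reduces to proving $\sigma(B_u) = 0$. To compute $c_A \cdot u$, use the identity $R_u y \cdot u = -y \cdot u$ and the change of variables $y = R_u x$ to cancel
\[
\int_{A^-} u \cdot x\, d\sigma(x) + \int_{R_u A^-} u \cdot x\, d\sigma(x) = \int_{A^-} u \cdot (x + R_u x)\, d\sigma(x) = 0.
\]
Splitting $A = B_u \cup R_u A^- \cup A^-$ disjointly (mod null) and exploiting this cancellation, one obtains
\[
c_A \cdot u = \int_{B_u} u \cdot x\, d\sigma(x).
\]
Because $B_u \subset A^+ \subset H_u$, the integrand is strictly positive on $B_u$, so $c_A \cdot u \ge 0$, with equality if and only if $\sigma(B_u) = 0$.

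To finish, apply Proposition~\ref{prop:cover}(c): condition (C1) places the origin in the interior of the convex hull of $G$, so the positive cone generated by $G$ is all of $\RR^d$. Consequently, for any vector $c \ne 0$ there is some $u \in G$ with $c \cdot u < 0$. Since $c_A \cdot u \ge 0$ for all $u \in G$, this forces $c_A = 0$. Back-substituting gives $c_A \cdot u = 0$ for every $u \in G$, hence $\sigma(B_u) = 0$ for every $u \in G$, which is exactly almost invariance of $A$ under each $R_u$.

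The main obstacle is identifying the right quantity to follow; once the center of mass is chosen as the Lyapunov functional, the proof is essentially automatic, since the reflection identity $(I + R_u)u = 0$ kills the $A^-$ and $R_u A^-$ contributions in the $u$-direction, isolating the excess set $B_u$, and (C1) then supplies the closing dual-cone argument with no additional technicalities.
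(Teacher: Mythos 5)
Your proof is correct and is essentially the paper's own argument: both hinge on showing that $\int_A x\cdot u\,d\sigma(x)\ge 0$ for each $u\in G$, with equality precisely when $A$ is almost invariant under $R_u$, and then using the hypothesis on $G$ (via Proposition~\ref{prop:cover}) to force all these quantities to vanish. The only cosmetic difference is that you phrase the closing step as ``$c_A\cdot u\ge 0$ for all $u\in G$ and $G$ not in a closed hemisphere imply $c_A=0$,'' whereas the paper writes $-\eps u_0=\sum_i\alpha_i u_i$ as a convex combination and concludes each summand vanishes; these are equivalent formulations of the same dual-cone argument.
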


\begin{proof}
If $A$ is almost positively invariant under $F_u$, then
$(R_u A\cap H_u)\setminus A$ is a null set.  In that case,
$$
\int_A x\cdot u\, d\sigma(x)
= \int_{A\cap H_u} x\cdot  u\, d\sigma(x)
- \int_{(R_uA)\cap H_u} x\cdot  u\, d\sigma(x)
\ge 0\,.
$$
Equality holds only if $(A\cap H_u)\setminus R_uA$
is a null set, in which case $A$ is almost invariant
under $R_u$.

Fix $u_0 \in G$. By Proposition~\ref{prop:cover},
the origin is an interior point of the convex hull of $G$.
This means that for $\eps>0$ sufficiently small,
$-\eps u_0$
can be represented as a convex combination
\begin{equation}
\label{eq:convex}
-\eps u_0= \sum_{i = 1}^n \alpha_i u_i\,
\end{equation}
with $u_i \in G$ and $\alpha_i \geq 0$ for $1 \leq i \leq n$.
It follows that
$$
\eps \int_A x\cdot u_0\, dx+ \sum_{i=1}^n
\alpha_i \int_A x\cdot u_i\, d\sigma(x) = 0\,.
$$
By the positive invariance of $A$, all summands are
nonnegative.  Since $\eps>0$, the integral
$\int x\cdot u_0 \, d\sigma(x)$ vanishes,
and therefore $A$ is almost invariant under $R_{u_0}$.
\end{proof}

\bigskip Under condition (C2), the sphere
has no non-trivial almost invariant subsets.

\begin{lem}\label{lem:01}
Let $G\subset\SS^{d-1}$. If
$\langle G \rangle$ is dense in $O(d)$,
then the only subsets of $\SS^{d-1}$
that are almost invariant under the reflections 
$\{R_u\mid u\in G\}$ are null sets and their complements.
\end{lem}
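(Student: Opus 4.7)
The strategy is to bootstrap almost invariance under the generators $\{R_u\mid u\in G\}$ to almost invariance under the full closure $\overline{\langle G\rangle}=O(d)$, and then to invoke ergodicity of $O(d)$ on $(\SS^{d-1},\sigma)$. To carry this out I would introduce the defect functional $F(g):=\sigma(A\bigtriangleup gA)$ on $O(d)$ and record two elementary properties: measure preservation gives $F(g^{-1})=F(g)$, while the pointwise inclusion
$$
A\bigtriangleup ghA \;\subset\; (A\bigtriangleup gA)\,\cup\,g(A\bigtriangleup hA)
$$
combined with $g_*\sigma=\sigma$ yields subadditivity $F(gh)\le F(g)+F(h)$. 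By hypothesis $F(R_u)=0$ for every $u\in G$, and subadditivity then forces $F\equiv 0$ on the entire subgroup $\langle G\rangle$.

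Next I would extend this vanishing to the closure by continuity. The standard strong continuity of the orthogonal action on $L^1(\SS^{d-1},\sigma)$ ensures that $\sigma(A\bigtriangleup h_nA)\to 0$ as $h_n\to I$ in $O(d)$; combined with subadditivity this makes $F$ continuous on $O(d)$. Since by (C2) the subgroup $\langle G\rangle$ is dense in $O(d)$ and $F$ already vanishes there, $F\equiv 0$ on $O(d)$, so $A$ is almost invariant under every orthogonal transformation.

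Finally, to conclude $\sigma(A)\in\{0,1\}$ I would run a Fubini argument against the Haar probability measure $\nu$ on $O(d)$. The Borel set $N=\{(g,x)\in O(d)\times\SS^{d-1}:\mathbf{1}_A(gx)\ne\mathbf{1}_A(x)\}$ has all horizontal slices $N_g=A\bigtriangleup g^{-1}A$ of $\sigma$-measure zero, so $(\nu\times\sigma)(N)=0$. Hence for $\sigma$-a.e.\ $x$, $\nu$-almost every $g$ satisfies $\mathbf{1}_A(gx)=\mathbf{1}_A(x)$. Integrating this identity against $\nu$ and using that $g\mapsto gx$ pushes $\nu$ to $\sigma$ (by uniqueness of the $O(d)$-invariant probability measure on the transitive space $\SS^{d-1}$) yields $\mathbf{1}_A(x)=\sigma(A)$ for $\sigma$-a.e.\ $x$, and since $\mathbf{1}_A$ takes values in $\{0,1\}$ this forces $\sigma(A)\in\{0,1\}$. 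The main obstacle is the $L^1$-continuity of the orthogonal action invoked in the second paragraph; this is classical, but it is what really powers the passage from the abstract subgroup $\langle G\rangle$ to its topological closure, and so deserves to be invoked explicitly.
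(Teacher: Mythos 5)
Your argument is correct, but it follows a genuinely different route from the paper. You upgrade the hypothesis group-theoretically: the defect functional $F(g)=\sigma(A\bigtriangleup gA)$ is symmetric and subadditive (via $A\bigtriangleup ghA\subset(A\bigtriangleup gA)\cup g(A\bigtriangleup hA)$ and $g_*\sigma=\sigma$), vanishes on the generators and hence on $\langle G\rangle$, and extends to zero on all of $O(d)$ by the strong continuity of the action on $L^1(\sigma)$ together with the density hypothesis; a Fubini argument against Haar measure, using that $g\mapsto gx$ pushes Haar measure to $\sigma$ by uniqueness of the invariant probability measure on the sphere, then forces $\mathbbm{1}_A$ to equal $\sigma(A)$ almost everywhere, so $\sigma(A)\in\{0,1\}$. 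The paper instead expands $\mathbbm{1}_A$ in spherical harmonics in $L^2(\SS^{d-1})$: uniqueness of the expansion makes each degree-$k$ component invariant under the reflections, hence under $\langle G\rangle$ and, by continuity, under all of $O(d)$; since for $k>0$ the only $O(d)$-fixed spherical harmonic is zero, $\mathbbm{1}_A$ agrees a.e.\ with the constant $Y_0=\sigma(A)$. Both proofs hinge on the same continuity-of-the-action step to pass from the abstract subgroup to its closure (you are right to flag the $L^1$-continuity explicitly; the paper compresses this into ``by continuity''). What your route buys is independence from harmonic analysis and immediate generality: it works verbatim for any compact group acting transitively and measure-preservingly on a compact metric space. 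What the paper's route buys is brevity given the spherical-harmonics machinery, plus finer information, namely that exactly the nonconstant Fourier modes of $\mathbbm{1}_A$ are killed.
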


\begin{proof} The claim can be proved directly, 
but we give a shorter proof which uses spherical harmonics.
Assume that $A\subset \SS^{d-1}$ is almost invariant
under $R_u$ for all $u\in G$.
Consider the indicator
function of $A$ as an element of $L^2(\SS^{d-1})$, and
expand it in spherical harmonics as
$$
\mathbbm{1}_A(x) = \sum_{k=0}^\infty Y_k(x)\,.
$$
Here, $Y_k$ is orthogonal projection of $\mathbbm{1}_A$
onto the spherical harmonics of degree
$k$, and the series converges in the mean-square sense.
Since this representation is unique,
each $Y_k$ is invariant under
composition with $R_u$ for all $u\in G$. Therefore,
$Y_k$ is invariant under $\langle G\rangle$, and, by continuity,
under the entire group $O(d)$. But for $k>0$,
the action of $O(d)$ on the spherical harmonics of degree $k$
fixes only the zero polynomial. It follows that
$Y_k=0$ for all $k>0$,
and the constant function $Y_0=\sigma(A)$
agrees $\sigma$-almost everywhere
with $\mathbbm{1}_A$.
\end{proof}

\bigskip\noindent Combining Lemmas~\ref{lem:inv}
and~\ref{lem:01}, we conclude that there are no
non-trivial compact positively invariant subsets.

\begin{lem}\label{lem:compact} Assume $G\subset\SS^{d-1}$
satisfies {\rm (C1)} and {\rm (C2)}. Then
no compact subset (and no open subset) of $\SS^{d-1}$
other than $\emptyset$ and $\SS^{d-1}$
is positively invariant under all foldings $\{F_u\mid u\in G\}$.
\end{lem}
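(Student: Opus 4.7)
The plan is to handle the compact case first via an $\eps$-neighborhood smoothing argument, and then reduce the open case to the compact one by duality. Lemmas~\ref{lem:inv} and~\ref{lem:01} already show that every positively invariant subset of $\SS^{d-1}$ has $\sigma$-measure in $\{0,1\}$, but this by itself does not exclude a non-empty compact null positively invariant set; the smoothing step lets me apply those lemmas to an open enlargement of $A$, where the null case is trivial.

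For the compact case, let $A$ be compact and positively invariant with $A\neq\emptyset$, and set $N_\eps(A)=\{y\in\SS^{d-1}\mid d(y,A)<\eps\}$. Then $N_\eps(A)$ is a non-empty open subset of the sphere that is itself positively invariant: given $y\in N_\eps(A)$ with $d(y,x)<\eps$ for some $x\in A$, the non-expansive inequality $d(F_uy,F_ux)\le d(y,x)<\eps$ together with $F_ux\in A$ places $F_uy$ in $N_\eps(A)$. Applying Lemma~\ref{lem:inv} (via (C1)) and Lemma~\ref{lem:01} (via (C2)) to $N_\eps(A)$ yields $\sigma(N_\eps(A))\in\{0,1\}$, and as a non-empty open subset of $\SS^{d-1}$ it has positive $\sigma$-measure, so $\sigma(N_\eps(A))=1$. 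Because $A$ is closed, $A=\bigcap_{\eps>0}N_\eps(A)$, and continuity of $\sigma$ from above gives $\sigma(A)=1$. Then $A^c$ is an open set of $\sigma$-measure zero, hence empty, and $A=\SS^{d-1}$.

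For the open case, suppose $A$ is open and positively invariant under $\{F_u\mid u\in G\}$. I claim the compact complement $A^c$ is positively invariant under the opposite folding maps $\{F_{-u}\mid u\in G\}$: if $y\in A^c\cap\overline{H_u}$ with $R_uy\in A$, then $R_uy\in H_u^c$, so positive invariance of $A$ would force $F_u(R_uy)=y\in A$, contradicting $y\in A^c$. Hence $R_u(A^c\cap\overline{H_u})\subset A^c$, which is exactly $F_{-u}(A^c)\subset A^c$. Because $H_{-u}=-H_u$ and $R_{-u}=R_u$, the direction set $-G$ inherits (C1) and (C2) from $G$, so the compact case applied to $A^c$ with $-G$ in place of $G$ yields $A^c\in\{\emptyset,\SS^{d-1}\}$, equivalently $A\in\{\SS^{d-1},\emptyset\}$. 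The main subtlety is the $\eps$-neighborhood step, which depends essentially on the non-expansive property of $F_u$ recorded in the paper's definitions.
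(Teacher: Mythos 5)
Your proof is correct and follows essentially the same route as the paper's: the $\eps$-neighborhood of $A$ is positively invariant by non-expansiveness, Lemmas~\ref{lem:inv} and~\ref{lem:01} force it to have full measure, and letting $\eps\to 0$ gives $A=\SS^{d-1}$, with the open case handled by passing to the compact complement, which is positively invariant under the foldings indexed by $-G$. Your use of open neighborhoods and continuity from above, versus the paper's closed neighborhoods and density-plus-compactness, is only a cosmetic difference.
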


\begin{proof} Let $A\subset \SS^{d-1}$ be a non-empty
compact set that is positively invariant under $F_u$ for
all $u\in G$.  By Lemma~\ref{lem:inv}, $A$
is almost invariant under $R_u$ for all $u \in G$, and by
Lemma~\ref{lem:01} either $A$ or its complement
is a null set.
We want to exclude the first alternative.

For every $\eps>0$,
$$
A_\eps=\{x\in\SS^{d-1}\ \vert\ d(x,A)\le \eps\}
$$
is a compact set of positive measure.
Since $A$ is positively invariant and the maps $F_u$ are
non-expansive, $A_\eps$ is
positively invariant as well. By Lemmas~\ref{lem:inv}
and Lemma~\ref{lem:01} its complement 
in $\SS^{d-1}$ is a null set; in particular, $A_\eps$ is
dense in $\SS^{d-1}$.
By compactness, $A_\eps=\SS^{d-1}$, and hence
$A=\bigcap A_\eps=\SS^{d-1}$.

If, on the other hand, $A$ is a non-empty open
set that is positively invariant under $G$,
then its complement is a compact set that is
positively invariant under $-G$. By the first part of the proof,
it is empty.  \end{proof}

\begin{proof}[Proof of Theorem~\ref{Thm:dense}.]
For $x\in\SS^{d-1}$, consider the orbit
$$
G_\star x =\{F_{u_n}\dots F_{u_1}x \vert\   n\ge 0,
u_1,\dots u_n\in G\}\,.
$$
By definition, $G_\star x$ contains all trajectories of
$x$ under sequences $(F_{u_n})$ with directions $u_n\in G$.
Since $G_\star x$ is positively invariant under $F_u$ for $u\in G$, 
its topological closure
is a positively invariant non-empty compact subset of $\SS^{d-1}$.
By Lemma~\ref{lem:compact},  the only such subset is the entire
sphere. It follows that $G_\star x$ is dense in $\SS^{d-1}$.
Hence there exists for every $\eps>0$ and every $x,y\in\SS^{d-1}$
a finite sequence of directions $u_1,\dots u_n$ in $G$
such that $d(F_{u_n}\dots F_{u_1}x, y)<\eps$.

We claim that the sequence can be chosen independently
of $x$ and $y$. 
In fact, for every $\eps>0$ there
is a finite sequence $u_1,\dots u_N$ in $G$ such that
\begin{equation}
\label{eq:unif-dense}
\min_{n\le N}
d(F_{u_n}\dots F_{u_1}x, y)<\eps\qquad \forall x,y\in\SS^{d-1}\,.
\end{equation}
The sequence is constructed 
by concatenating a finite number of 
shorter segments $\mathcal{S}_1,\dots, \mathcal{S}_K$ as follows.

Given $\eps>0$, cover $\SS^{d-1}$
by finitely many open balls $B_1,\dots, B_K$ of radius $\eps/3$ centered
at $c_1,\dots, c_K$.  To construct $\mathcal{S}_1$,
choose a finite sequence of directions in $G$ such that
the corresponding
trajectory starting at $c_1$ visits the ball $B_2$, and then
extend that sequence so that the trajectory
visits each of the balls $B_1,\dots B_K$.
The segments $\mathcal{S}_k$ for $1 < k \leq K$
are constructed inductively.
Assuming $\mathcal{S}_1,\dots, \mathcal{S}_{k-1}$
have already been chosen, 
let $y_k$ be the final point of the trajectory of $c_k$ under 
$\mathcal{S}_1, \dots, \mathcal{S}_{k-1}$. Choose $\mathcal{S}_k$ such that
the trajectory of $y_k$ under $\mathcal{S}_k$
visits each of the balls $B_1,\dots, B_K$.
Then the trajectory of
$c_k$ under $\mathcal{S}_1,\dots,\mathcal{S}_k$ 
visits each of the balls.

Let $u_1,\dots, u_N$ be the sequence of
directions given by $\mathcal{S}_1, \dots, \mathcal{S}_K$. 
For $x,y\in\SS^{d-1}$,
let $B_i$ and $B_j$ be the balls containing
$x$ and $y$, respectively.  By construction,
$$
F_{u_n}\dots F_{u_1}c_i \in B_j
$$
for some $n\le N$.
Since foldings are non-expansive, the triangle inequality implies
\begin{align*}
d(F_{u_n}\dots F_{u_1} x, y)
\le d(x,c_i) +
d(F_{u_n}\dots F_{u_1}c_i,c_j) + d(c_j,y) < \eps\,.
\end{align*}
This establishes Eq.~\eqref{eq:unif-dense}.
The desired infinite sequence $(u_n)$ is obtained
by concatenating the finite sequences constructed above for $\eps=2^{-j}$
with $j\ge 1$.
\end{proof}

\begin{proof}[Proof of Corollary~\ref{cor:radial}]
Let $\phi$ be a continuous function on $\RR^d$
such that $S_u\phi=\phi$
for all $u\in G$. 
Then $\phi\circ F_u\ge \phi$ for all $u\in G$,
that is, $\phi$ increases along trajectories.
We need to show that the restriction of $\phi$ to each
centered sphere $\{|x|=R\}$ is constant. By scaling, it suffices to
consider the case $R=1$.

Let $x,y\in\SS^{d-1}$ be given.
By Theorem~\ref{Thm:dense}
there exists an infinite sequence of directions $(u_n)$
such that the trajectory $(x_n)$
defined by Eq.~\eqref{eq:def-trajectory} is dense in $\SS^{d-1}$.
Choose a subsequence $(x_{n_k})$
that converges to~$y$. By monotonicity and continuity,
$$
\phi(x) \le \lim_{k\to\infty} \phi(x_{n_k}) = \phi(y)\,.
$$
Switching the  role of $x$ and $y$ yields the reverse inequality
$\phi(y)\le \phi(x)$.
We conclude that $\phi$ is constant on $\SS^{d-1}$.
\end{proof}

Theorem~\ref{Thm:dense} 
says that there exists a sequence of 
directions in a set $G$ (satisfying (C1) and (C2))
that generates dense trajectories for all starting points.
However, given $G$, it is not obvious how to
explicitly find a dense trajectory.
Different from reflections, periodic sequences of 
foldings never generate dense trajectories.

\begin{prop}
\label{prop:periodic}
Let $(u_n)_{n\ge 1}$
be a periodic sequence of directions, 
with $u_{n+p} = u_n$ for some integer $p$
and all $n\ge 1$.  Then Eq.~\eqref{eq:def-trajectory} 
has no dense trajectories, and at least
one trajectory is periodic. 
If, in addition, $G=\{u_1,\dots, u_p\}$
satisfies (C1) and (C2), then
the periodic trajectory is non-constant.
\end{prop}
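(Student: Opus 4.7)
My plan decomposes the statement into three pieces. The \emph{existence of a periodic trajectory} will come from a fixed point of $T := F_{u_p}\circ\cdots\circ F_{u_1}$ on $\SS^{d-1}$. Because the image of $T$ lies in the closed hemisphere $\{x\cdot u_p\ge 0\}\cap\SS^{d-1}$, it misses the antipode $-u_p$, so $T$ factors through a contractible set and is null-homotopic. For $d\ge 2$ its Lefschetz number equals $1+(-1)^{d-1}\deg(T)=1$, so by the Lefschetz fixed point theorem there is some $y^\star\in\SS^{d-1}$ with $T(y^\star)=y^\star$. Setting $z_0 = y^\star$ and $z_j = F_{u_j}(z_{j-1})$ for $1\le j\le p$ gives $z_p = y^\star$, so the trajectory starting at $y^\star$ is periodic of period dividing $p$.

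To rule out \emph{dense trajectories}, I track the distance from an arbitrary trajectory to the periodic one \emph{in phase}. Set
$$\phi_n := |x_n - z_{n\bmod p}|^2\,.$$
Since $z_{(n+1)\bmod p} = F_{u_{n+1}}(z_{n\bmod p})$ and $x_{n+1} = F_{u_{n+1}}(x_n)$, the one-step non-expansive inequality for $F_{u_{n+1}}$ yields $\phi_{n+1}\le\phi_n$, so $\phi_n$ decreases monotonically to some $\ell\in[0,4]$. For any accumulation point $y=\lim x_{n_k}$, passing to a subsequence with $n_k\equiv j\pmod p$ gives $|y-z_j|^2=\ell$, so $y$ lies on the level set
$$
S_j := \bigl\{w\in\SS^{d-1} : w\cdot z_j = 1-\tfrac{\ell}{2}\bigr\}\,.
$$
Hence the $\omega$-limit of the trajectory is contained in $\bigcup_{j=0}^{p-1}S_j$, a finite union of parallel $(d-2)$-dimensional subspheres (or singletons when $\ell\in\{0,4\}$), which for $d\ge 2$ is a closed set of empty interior in $\SS^{d-1}$. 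If $(x_n)$ were dense, then every $y\notin\{x_n\}$ would lie in the $\omega$-limit, and since $\SS^{d-1}\setminus\{x_n\}$ is dense (as the complement of a countable set in an uncountable separable space), the closed $\omega$-limit would equal all of $\SS^{d-1}$---contradicting the containment in a nowhere dense subset.

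For the \emph{non-constant} conclusion, suppose the periodic trajectory degenerates to $z_0=z_1=\cdots=z_{p-1}=y^\star$. Then $F_{u_j}(y^\star)=y^\star$ for every $j$, equivalently $y^\star\cdot u_j\ge 0$ for all $j\in\{1,\ldots,p\}$. Applying (C1) at $-y^\star\in\SS^{d-1}$ produces some $u_j\in G$ with $-y^\star\in H_{u_j}$, that is $y^\star\cdot u_j<0$, a contradiction. Hence the periodic trajectory is non-constant. Note that only (C1) is actually used here; (C2) is carried along from the hypothesis but plays no role.

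The main obstacle I anticipate is Step~2: identifying the Lyapunov function $\phi_n$ and converting the resulting lower-dimensional containment of the $\omega$-limit into a genuine contradiction with topological density. The choice of $\phi_n$ is natural once the reference periodic trajectory $(z_j)$ is available, which makes Step~1 a logical prerequisite for Step~2, and the dimension count requires $d\ge 2$ (consistent with the fact that on $\SS^0$ a periodic two-step sequence $u_1=1, u_2=-1$ produces a trajectory that is already ``dense''). The algebraic step~3 is essentially immediate once Step~1 is in place.
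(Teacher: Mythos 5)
Your proof is correct and follows essentially the same strategy as the paper: produce a periodic orbit by a topological fixed point theorem, then use non-expansiveness of the foldings to trap every accumulation point of an arbitrary trajectory in a finite union of $(d-2)$-dimensional subspheres centred on the periodic orbit, a closed nowhere dense set, which rules out density. Two variations are worth noting. For existence, the paper simply restricts $F_{u_p}\circ\cdots\circ F_{u_1}$ to the closed hemisphere $\{x\cdot u_p\ge 0\}\cap\SS^{d-1}$, which is mapped into itself and is homeomorphic to a closed ball, and applies Brouwer; your Lefschetz computation is valid but heavier than necessary, since your own observation that the image avoids $-u_p$ already sets up the Brouwer argument. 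For the non-density step, your phase-locked Lyapunov quantity $\phi_n=|x_n-z_{n\bmod p}|^2$ is a slightly cleaner packaging of the paper's argument, which fixes a limit point $y$ of $(x'_{kp})$ and shows its whole trajectory sits at constant distance $r$ from the periodic points. For non-constancy, the paper notes that a constant periodic trajectory would make a singleton positively invariant and invokes Lemma~\ref{lem:compact}, which uses both (C1) and (C2); your direct argument --- a common fixed point $y^\star$ of all $F_{u_j}$ forces $y^\star\cdot u_j\ge 0$ for every $j$, contradicting the covering condition (C1) applied at $-y^\star$ --- is more elementary and correctly shows that (C2) plays no role in that part of the conclusion.
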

\begin{proof}
Consider the composition
$F:= F_{u_p}\circ \dots \circ F_{u_1}$ as a
map from $H_{u_p}$ to itself.
Since $F$ is continuous and $H_{u_p}$ is homeomorphic to a ball
in $\RR^{d-1}$, by Brouwer's fixed point theorem
there exists a point $x\in H_{u_p}$ with $F(x)=x$. By construction,
the trajectory $(x_n)_{n\ge 0}$
of $x$ is periodic, and in particular not dense.
If $x_n'$ is another trajectory, then the
sequence $d(x_n,x_n')$ is non-increasing,
because folding maps are non-expansive.
Let $r=\lim d(x_n,x'_n)$. If $r=0$, then
the limit points of $(x'_n)$ are precisely
$x_1,\dots, x_p$. Otherwise,
let
$y$ be a limit point of $(x'_{kp})_{k\ge 1}$,
and let $(y_n)$ be the trajectory of $y$.
By construction, each point $y_n$
on the trajectory of $y$ is a limit
point of the subsequence $(x'_{n+kp})_{k\ge 1}$.
Therefore, $d(x_n,y_n)= r$
for all $n\ge 0$, that is, $(y_n)$ lies in the
union of the $d-2$-dimensional subspheres 
of radius $r$ centered at the points
$x_1,\dots, x_p$. Since this is a null set,
neither $(y_n)$ nor $(x_n')$ is dense in $\SS^{d-1}$.

For the last claim, suppose that
$(x_n)$ is constant. Then the singleton $\{x\}$
is positively invariant under~$G$.
By Lemma~\ref{lem:compact}, either (C1) or (C2)
fails.  \end{proof}

On the other hand, most trajectories are dense
under the assumptions
of Theorem~\ref{Thm:dense}.

\begin{prop} [Random walks are dense]\label{prop:RW-dense}
Let $(U_n)$ be an i.i.d.  sequence of random directions on
$\SS^{d-1}$ with distribution $\mu$.
If the support of $\mu$ satisfies {\rm (C1)}
and {\em (C2)} of Theorem~\ref{Thm:dense},
then almost surely the trajectory
in Eq.~\eqref{eq:def-RW}
is dense in $\SS^{d-1}$ for every starting point $x$.
\end{prop}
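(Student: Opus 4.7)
The plan is to combine the uniform density estimate from the proof of Theorem~\ref{Thm:dense} (namely Eq.~\eqref{eq:unif-dense}) with a block Borel--Cantelli argument. Fix $\eps > 0$. The estimate supplies a finite sequence $u_1^\star,\dots,u_N^\star \in G$ such that
\begin{equation*}
\min_{n\le N} d(F_{u_n^\star}\cdots F_{u_1^\star} x, y) < \tfrac{\eps}{2} \qquad \forall\, x,y\in\SS^{d-1}\,.
\end{equation*}

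The first task is a perturbation step. Observe that $F_u(x)$ is jointly continuous in $(u,x)$: on the crease $\{x\cdot u = 0\}$ both branches of its definition yield $x$. Hence the map
\begin{equation*}
(v_1,\dots,v_N,x,y)\;\mapsto\; \min_{n\le N} d(F_{v_n}\cdots F_{v_1}x,y)
\end{equation*}
is continuous on the compact space $(\SS^{d-1})^{N+2}$, and a standard tube argument produces open neighborhoods $V_i\subset\SS^{d-1}$ of $u_i^\star$ such that
\begin{equation*}
\min_{n\le N} d(F_{v_n}\cdots F_{v_1} x, y) < \eps
\end{equation*}
whenever $(v_1,\dots,v_N) \in V_1\times\cdots\times V_N$ and $x,y\in\SS^{d-1}$. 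Because each $u_i^\star$ lies in the support $G$ of $\mu$, we have $\mu(V_i)>0$, and therefore $p := \prod_{i=1}^N \mu(V_i) > 0$.

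The second step is to partition the i.i.d.\ directions $(U_n)$ into consecutive blocks of length $N$, calling the $k$-th block \emph{good} if $(U_{kN+1},\dots,U_{(k+1)N}) \in V_1\times\cdots\times V_N$. These events are mutually independent, each of probability $p$, so by the second Borel--Cantelli lemma, almost surely infinitely many blocks are good. Any good block starting at time $kN$ guarantees, regardless of the state $X_{kN}$ and uniformly in the initial point $x$, that within the ensuing $N$ steps the trajectory visits the $\eps$-neighborhood of every $y\in\SS^{d-1}$. Consequently the full trajectory is $\eps$-dense almost surely, uniformly over starting points.

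Intersecting this event over $\eps = 1/k$, $k\ge 1$, produces a single event of full probability on which $(X_n)$ is dense in $\SS^{d-1}$ for every starting point $x$, which is even a bit stronger than the stated conclusion. The main technical point is the perturbation step: one needs the uniform density of Eq.~\eqref{eq:unif-dense} to survive small changes of the directions, which in turn reduces to the joint continuity of $F_u(x)$ and the compactness of $\SS^{d-1}$. The remainder is a direct Borel--Cantelli argument for independent blocks of positive probability.
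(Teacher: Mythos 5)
Your proof is correct and follows essentially the same route as the paper's: it uses the uniform density estimate Eq.~\eqref{eq:unif-dense} from the proof of Theorem~\ref{Thm:dense}, exploits the continuous dependence of finite trajectory segments on the directions to produce an event of positive probability, and applies the second Borel--Cantelli lemma to conclude that this event occurs infinitely often. Your version merely spells out the perturbation (joint continuity of $F_u(x)$ plus a tube argument) and the independent-block structure that the paper's proof leaves implicit.
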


\begin{proof} Let $\eps>0$ be given.
For $n\ge 1$, let $A_{\eps,N}$ be the event that
for every pair of points $x,y\in\SS^{d-1}$, there
exists an integer $n$ such that
the random trajectory starting at $x$ intersects an
open $\eps$-neighborhood of $y$
within some segment $X_{n+1},\dots, X_{n+N}$.
By Theorem~\ref{Thm:dense}, there exists an $N<\infty$
and a sequence $(u_n)$ in the support of $\mu$
such that $A_{\eps,N}$ occurs on $X_1,\dots, X_N$.
Since finite segments of trajectories depend
continuously on the sequence of directions, the probability of
$A_{\eps, N}$ is strictly positive.  By the Borel-Cantelli lemma,
almost surely the event $A_{\eps, N}$ occurs infinitely often.
Since $\eps$ was arbitrary, the trajectory is almost surely dense.
\end{proof}

\section{Random walks and invariant measures}
\label{sec:random}

In this section, we prove the ergodicity results in 
Theorem~\ref{Thm:ergodic} and
Corollary~\ref{cor:Birkhoff}. For the construction
of the invariant measure, we need some more notation.

Consider for the moment a single
random direction $U$ in $\SS^{d-1}$
with probability distribution~$\mu$.
The random folding $F_U$
pulls a Borel function $\phi$ on $\SS^{d-1}$
back to
\begin{equation}
\label{eq:def-T}
T_\mu\phi(x) 
= E(\phi(F_U(x)) 
= \int_{\SS^{d-1}} \phi(F_u(x))\,d\mu(u)\,.
\end{equation}
The operator $T_\mu$ is linked to the action on measures defined in
Eq.~\eqref{eq:T-sharp} by the change-of-variables formula
$$
\int_{\SS^{d-1}} (T_\mu\phi)\, 
d\nu = \int_{\SS^{d-1}} \phi\, d(T_\mu\#\nu)\,.
$$
Clearly, $T_\mu$ is a positivity-preserving linear operator
that fixes constant functions.  Since folding maps are
non-expansive, $T_\mu$ preserves 
or improves the modulus of continuity of a continuous function.

The following lemma establishes a useful
monotonicity property for the extrema of~$\phi$ and 
$T_\mu\phi$.

\begin{lem} \label{lem:max}
Let $\mu$ be a probability measure
on $\SS^{d-1}$ whose support satisfies {\rm (C1)}.
Define $T_\mu$ by Eq.~\eqref{eq:def-T},
and let $\phi$ be a continuous function
on $\SS^{d-1}$ with $\max \phi=M$.
Then $\max T_\mu\phi\le M$, and for any $x\in\SS^{d-1}$
\[
T_\mu\phi(x)=M\quad \Longrightarrow\quad \phi(x)=M\,.
\]
The corresponding statements hold for the minimum of $\phi$.
\end{lem}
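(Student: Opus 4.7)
The upper bound is immediate: since $\phi\le M$ pointwise, integrating $\phi\circ F_u(x)\le M$ against $d\mu(u)$ yields $T_\mu\phi(x)\le M$ for every $x$. The substance of the lemma lies in the rigidity statement, and my plan there is a standard ``equality in an integral inequality'' argument together with a careful use of continuity and of (C1).

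Suppose $T_\mu\phi(x)=M$. Since the integrand $u\mapsto \phi(F_u(x))$ is bounded above by $M$ and integrates to $M$, it must equal $M$ for $\mu$-almost every $u$. I want to upgrade this to \emph{every} $u$ in the support $G$ of $\mu$, for which it suffices to check that $u\mapsto \phi(F_u(x))$ is continuous on $\SS^{d-1}$. The one point deserving care is that $F_u(x)$ is defined by cases according to the sign of $x\cdot u$: on $\{x\cdot u>0\}$ it equals $x$, and on $\{x\cdot u\le 0\}$ it equals $R_ux=x-2(x\cdot u)u$. However, the two expressions agree on the crease $\{x\cdot u=0\}$ (both equal $x$), so $u\mapsto F_u(x)$ is in fact continuous on $\SS^{d-1}$. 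Composition with the continuous function $\phi$ then gives the required continuity.

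With this in hand, the set $\{u\in\SS^{d-1}\mid \phi(F_u(x))=M\}$ is closed and of full $\mu$-measure, so it contains the support $G$. Now I would invoke condition (C1) in the form of Proposition~\ref{prop:cover}(a): the open half-spaces $H_u$ with $u\in G$ cover $\SS^{d-1}$, so there exists $u^*\in G$ with $x\in H_{u^*}$. For that direction $F_{u^*}(x)=x$, and hence $\phi(x)=\phi(F_{u^*}(x))=M$. The statement for the minimum follows by applying the maximum case to $-\phi$.

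The only delicate step, and thus the main (though mild) obstacle, is the continuity of $u\mapsto F_u(x)$ across the crease hyperplane; once that observation is made, the rest is a direct extraction of a single distinguished direction from the support via (C1).
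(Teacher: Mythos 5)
Your argument is correct, but it reaches the conclusion by a different mechanism than the paper. You argue: equality in the integral forces $\phi(F_u(x))=M$ for $\mu$-a.e.\ $u$; continuity of $u\mapsto \phi(F_u(x))$ (your crease check is the right one --- on $\{x\cdot u\ge 0\}$ the map is constantly $x$, on $\{x\cdot u\le 0\}$ it is $R_ux$, and the two agree where $x\cdot u=0$) upgrades this to every $u$ in the support $G$; then (C1) in its covering form supplies some $u^*\in G$ with $x\in H_{u^*}$, whence $\phi(x)=\phi(F_{u^*}(x))=M$. The paper instead splits the integral in Eq.~\eqref{eq:def-T} over $H_x$ and its complement, estimates
\[
T_\mu\phi(x)\le \phi(x)\,\mu(H_x)+M\,\bigl(1-\mu(H_x)\bigr)\le M\,,
\]
and uses Proposition~\ref{prop:cover}(d), i.e.\ $\mu(H_x)>0$, to conclude that equality forces $\phi(x)=M$. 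The paper's route is shorter and needs neither the continuity of $\phi$ nor the support characterization, so it applies verbatim to bounded measurable $\phi$; your route costs the (correct but extra) continuity discussion, and in exchange yields slightly more along the way, namely that $\phi(F_u(x))=M$ for \emph{every} $u$ in the support whenever $T_\mu\phi(x)=M=\max\phi$, a refinement the paper does not record here. Both proofs ultimately rest on the same fact: by (C1), the directions $u$ for which $F_u$ fixes $x$ carry positive $\mu$-mass (equivalently, meet the support).
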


\begin{proof} Left $x\in\SS^{d-1}$.
Since $F_u(x)=x$ for all $u\in H_x$,
and $\phi(x)\le M$ for all $u$, we have
\begin{align*}
T_\mu\phi(x) &= \int _{H_x} \phi(x)\, d\mu(u) +
\int_{\SS^{d-1}\setminus H_x}
\phi(R_u x) \, d\mu(u)\\
&\le \phi(x)\mu(H_x) + M \mu(\SS^{d-1}\!\setminus\!H_x)\\
&\le M\,.
\end{align*}
Since $\mu(H_x)>0$ by Proposition~\ref{prop:cover},
equality implies $\phi(x)=M$.
\end{proof}

The random walk is associated with a canonical
Markov process on $\Omega=\prod_{n\ge 0} \SS^{d-1}$,
the space of sequences $(X_n)_{n\ge1}$
on $\SS^{d-1}$ endowed with the product
topology.
If $\Phi$ is a function on $\Omega$, we write
$$
E_x(\Phi((X_n)_{n\ge 0})) =
\int \Phi\left(
(F_{U_n}\dots F_{U_1}x)_{n\ge 0}\right) \,
d\mu^{\otimes}
$$
for the {\bf expected value} of $\Phi$ on the random walk $(X_n)$
starting at $X_0=x$. Here, $\mu^{\otimes}$
is the product measure that defines the distribution
of the sequence $(U_n)$.  Correspondingly, the probability of
an event $A\subset\Omega$ is denoted by
$P_x(A) = E_x(\mathbbm{1}_A)$.

The canonical Markov process is completely
determined by either of the operators~$T_\mu$ 
or $T_\mu\#$.
By the Markov property,
$$
E_x(\phi(X_n)) = (T_\mu)^n\phi(x)\,.
$$
The next result says that $(T_\mu)^n\phi$ converges uniformly
to a constant function.

\begin{prop} \label{prop:ergodic}
Let $\mu$ be a measure on $\SS^{d-1}$
whose support satisfies conditions~{\rm (C1)}
and {\rm (C2)} of
Theorem~\ref{Thm:dense}.
For every continuous function $\phi$ on $\SS^{d-1}$,
there exists a constant $\bar\phi$ such that
$$
\lim_{n\to\infty} E_x \left (\phi(X_n)\right) = \bar\phi
$$
uniformly in $x$. Moreover,
$$
\min \phi \le \bar\phi \le \max\phi\,.
$$
Both inequalities are strict unless
$\phi$ is constant.
\end{prop}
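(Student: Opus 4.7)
The plan is to first establish precompactness of the iterates $T_\mu^n\phi$ in $C(\SS^{d-1})$, then exploit the forward/backward structure of the $T_\mu$-action on the cluster set to produce a constant uniform limit, and finally verify the strict inequalities. I begin by noting that $(T_\mu^n\phi)_{n\ge 0}$ is equicontinuous: since every folding map is non-expansive, $T_\mu$ preserves (or improves) the modulus of continuity of $\phi$, and $\|T_\mu^n\phi\|_\infty\le\|\phi\|_\infty$, so Arzelà-Ascoli makes the orbit precompact in $C(\SS^{d-1})$. Let $\mathcal{L}$ be the set of its uniform cluster points---a non-empty compact subset of $C(\SS^{d-1})$. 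The continuity of $T_\mu$ gives $T_\mu(\mathcal{L})\subset \mathcal{L}$, and a subsequential extraction applied to the preceding iterates gives the reverse inclusion, so $T_\mu(\mathcal{L})=\mathcal{L}$. By Lemma~\ref{lem:max} the sequences $M_n:=\max T_\mu^n\phi$ and $m_n:=\min T_\mu^n\phi$ are monotone, with limits $M$ and $m$, and every $\psi\in\mathcal{L}$ satisfies $\max\psi=M$ and $\min\psi=m$; thus uniform convergence to a constant is equivalent to showing $M=m$.

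To prove $M=m$, I would pick $\psi\in\mathcal{L}$ and use surjectivity of $T_\mu$ on $\mathcal{L}$ together with compactness to build a backward orbit $\psi_0=\psi,\psi_1,\psi_2,\dots$ in $\mathcal{L}$ with $T_\mu\psi_{n+1}=\psi_n$. Setting $A_n:=\{\psi_n=M\}$, the identity
\[
M = \psi_n(x) = \int_{\SS^{d-1}}\psi_{n+1}(F_u(x))\,d\mu(u)
\]
for $x\in A_n$, together with $\psi_{n+1}\le M$ and the continuity of $u\mapsto\psi_{n+1}(F_u(x))$, forces $\psi_{n+1}(F_u(x))=M$ on the support $G$ of $\mu$. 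Hence $F_u(A_n)\subset A_{n+1}$ for $u\in G$, while Lemma~\ref{lem:max} further gives $A_n\subset A_{n+1}$. Choosing $x_0\in A_0$, the orbit $G_\star x_0$ lies in $\bigcup_n A_n$, and by the argument of Lemma~\ref{lem:compact} its closure is a compact positively invariant subset of $\SS^{d-1}$, hence equals $\SS^{d-1}$. So $\bigcup_n A_n$ is dense. A subsequential limit $\psi_{n_k}\to\psi^\infty\in\mathcal{L}$ then satisfies $\psi^\infty\equiv M$ on this dense set---because $A_n$ is increasing, $\psi_n(x)=M$ for all large $n$ at each fixed $x$ in the union---and hence everywhere by continuity. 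But $\min\psi^\infty=m$, so $M=m$. Writing $\bar\phi:=M=m$, uniform convergence follows from the sandwich $m_n\le T_\mu^n\phi\le M_n$.

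For the strict inequality $\bar\phi<\max\phi$ when $\phi$ is non-constant (the minimum case is symmetric), assume $\bar\phi=M_0:=\max\phi$; then $M_n=M_0$ for every $n$. The non-empty closed sets $E_n:=\{T_\mu^n\phi=M_0\}$ are decreasing by Lemma~\ref{lem:max}, so $E_\infty:=\bigcap_n E_n$ is non-empty. The same ``$\mu$-a.e.\ becomes everywhere on $G$'' argument applied at each level $n$ shows $F_u(E_\infty)\subset E_\infty$ for every $u\in G$, and Lemma~\ref{lem:compact} forces $E_\infty=\SS^{d-1}$; at $n=0$ this yields $\phi\equiv M_0$, a contradiction.

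The step I expect to be the main obstacle is the passage from the $\mu$-almost-everywhere identity $\psi_{n+1}(F_u(x))=M$ to its pointwise validity on all of $G$. This relies on the continuity of $u\mapsto F_u(x)$ on $\SS^{d-1}$---which holds because the two branches $x$ and $R_ux$ of $F_u(x)$ agree on the boundary $u\cdot x=0$---together with the definition of $G$ as the support of $\mu$. Everything else is a direct assembly of Lemmas~\ref{lem:max}, \ref{lem:compact}, and Arzelà-Ascoli.
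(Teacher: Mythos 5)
Your proof is correct, but it reaches the crucial point --- that the uniform cluster values of $(T_\mu^n\phi)$ collapse to a single constant --- by a mechanism different from the paper's. The paper extracts a subsequence $\phi_{n_k}\to\bar\phi$ with strictly increasing gaps, shows that the forward iterates $\psi_m=(T_\mu)^m\bar\phi$ return to $\bar\phi$ along the gap subsequence, and then applies Lemma~\ref{lem:max} to the \emph{decreasing} chain of maximizer sets $\{\psi_m=M\}$, whose nonempty compact, positively invariant intersection must be all of $\SS^{d-1}$ by Lemma~\ref{lem:compact}; this gives $\bar\phi\equiv M$ directly, and convergence of the full sequence follows because $\|\phi_n-\bar\phi\|_\infty$ is non-increasing. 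You instead work with the entire cluster set $\mathcal{L}$, prove $T_\mu(\mathcal{L})=\mathcal{L}$, build a backward orbit in $\mathcal{L}$, and obtain an \emph{increasing} chain of maximizer sets that traps the orbit $G_\star x_0$ of a maximum point; density of that orbit (the first step of the proof of Theorem~\ref{Thm:dense}) together with a subsequential limit of the backward orbit forces $M=m$, and you conclude with the sandwich $m_n\le (T_\mu)^n\phi\le M_n$, which is arguably cleaner than the monotone-norm argument. Both routes rest on the same pillars (Arzel\`a-Ascoli, Lemma~\ref{lem:max}, Lemma~\ref{lem:compact}) and on the step you rightly flag: passing from the $\mu$-a.e.\ identity $\psi(F_u(x))=M$ to all $u$ in the support, which is valid precisely because $u\mapsto F_u(x)$ is continuous (the two branches agree on $\{u\cdot x=0\}$) --- a point the paper leaves implicit. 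What your approach buys is that it avoids the increasing-gap subsequence trick, at the modest cost of verifying surjectivity of $T_\mu$ on $\mathcal{L}$ so that backward orbits exist; your treatment of the strict inequalities is essentially the paper's, run as a contradiction so that the sets $E_n$ are automatically nonempty.
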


\begin{proof} Let $||\cdot||$ denote the sup-norm on the
space of continuous functions.
Set $\phi_n(x)= E_x(\phi(X_n))$.
Since $\phi_n= (T_\mu)^n\phi$, its
norm $||\phi_n||$ decreases monotonically and
the modulus of continuity of $\phi_n$ improves with~$n$.
By the Arzel\`a-Ascoli theorem,
there exists a subsequence $(\phi_{n_k})_{k\ge 1}$
that converges uniformly to some limiting function,
$\bar \phi$.  After passing to a further subsequence, we may
assume that the sequence of gaps $n_k-n_{k-1}$ increases strictly
with $k$.

We want to show that $\bar\phi$ is constant.
Consider the sequence
$(\psi_n)$ defined by $\psi_n=(T_\mu)^n\bar \phi$.
Clearly,
$$
||\psi_m-\phi_{m+n}|| = ||(T_\mu)^m(\bar\phi-\phi_n)||
\le ||\bar\phi- \phi_n||
$$
for all $m,n\ge 0$. We use the
triangle inequality and set
$m=n_k-n_{k-1}$, $n=n_k$ to obtain the bound
\begin{align*}
||\psi_{n_k-n_{k-1}} - \bar\phi||
&\le
|| \psi_{n_k-n_{k-1}} - \phi_{n_k}||+ || \phi_{n_k}-\bar\phi||\\
&\le
|| \bar\phi - \phi_{n_{k-1}}||+ || \phi_{n_k}-\bar\phi||\,,
\end{align*}
which converges to zero by the choice of the subsequence $(n_k)$.
It follows that the subsequence $\psi_{n_k-n_{k-1}}$
converges uniformly to $\bar\phi$.

Let $M=\max\bar\phi$.
Since $\max\psi_n$ is non-increasing in $n$, and
a subsequence converges to $\psi_0=\bar\phi$,
we must have $\max\psi_n=M$ for all $n\ge 0$.
By Lemma~\ref{lem:max}, the
sets $A_n=\{x: \psi_n(x)=M\}$ form a decreasing
chain. Their
intersection $A=\bigcap A_n$ is a non-empty compact set
that is positively
invariant under $F_u$ for $\mu$-a.e. $u\in\SS^{d-1}$.
By Lemma~\ref{lem:01}, $A=\SS^{d-1}$. But this says that
$\psi_n\equiv M$ for all $n$, and the same holds for their limit $\bar\phi$.
Since the sequence
$||\phi_n-\bar\phi||$ is non-increasing, the convergence of the
full sequence follows from the convergence of the subsequence.

Clearly, $\min \phi\le \bar\phi\le \max\phi$,
since $\min \phi_n$ is non-decreasing and
$\max\phi_n$ is non-increasing.
Let $M=\max \phi$, and consider the decreasing
chain of compact subsets $A_n=\{x:\phi_n(x)=M\}$.
Since the intersection $A=\bigcap A_n$ is compact and
positively invariant, by Lemma~\ref{lem:compact}, it is
either equal to $\SS^{d-1}$ or empty. In the first case,
$\phi\equiv M$ is constant. In the second case, by compactness,
$A_n$ is empty and thus $\max\phi_n<M$ for some sufficiently
large $n$. By monotonicity, $\bar\phi\le \max \phi_n <M$.
The same argument shows that $\min \phi<\bar\phi $ unless $\phi$ is
constant.
\end{proof}

\begin{proof}[Proof of Theorem~\ref{Thm:ergodic}] 
Let $\phi$ be a continuous function on $\SS^{d-1}$,
and let $(X_n)$ be the random walk defined
by Eq.~\eqref{eq:def-RW}.
By Proposition~\ref{prop:ergodic}, 
$ E_x \left (\phi(X_n)\right)$ converge 
to a constant function, $\bar \phi$, uniformly in $x$.
It is a fact of Ergodic Theory
that this uniform convergence is equivalent to the unique 
ergodicity of the Markov chain~\cite[Theorem 4.10]{EW}.  
We show the part of the proof that we need here.

The map $\phi\mapsto \bar\phi$ is linear and continuous
with respect to the topology of uniform convergence,
and its value on nonnegative functions
is nonnegative. By the Riesz-Markov theorem,
there is a unique regular Borel measure
$\rho$ such that
$$\bar\phi = \int_{\SS^{d-1}} \phi \, d\rho\,.
$$
Since the constant function $\phi\equiv 1$ is mapped to
itself, $\rho$ is a probability
measure.

We next verify that $\rho$ is invariant.
For every continuous function $\phi$ on $\SS^{d-1}$,
$$
\int_{\SS^{d-1}} \phi \, d(T_\mu\#\rho) 
= \int_{\SS^{d-1}} (T_\mu\phi) \, d\rho
= \overline{T_\mu\phi}\,.
$$
Since 
$$
\overline{T_\mu\phi} = \lim_{n\to\infty} E_x(T_\mu\phi(X_n))
= \lim_{n\to\infty} E_x(\phi(X_{n+1}) = \bar\phi
= \int_{\SS^{d-1}} \phi\, d\rho\,,
$$
the measure $T_\mu\#\rho$ 
represents the same distribution as
$\rho$. By uniqueness,  $T_\mu\#\rho=\rho$.

It remains to show that
the support of $\rho$ is $\SS^{d-1}$.
Given an arbitrary non-empty open
set $A\subset\SS^{d-1}$,
let $\phi$ be a nonnegative
continuous function supported on $A$ that takes values in $[0,1]$
and does not vanish identically.
Then $\rho(A)\ge \bar\phi>0$ by the last
part of Proposition~\ref{prop:ergodic}.
Since $A$ was arbitrary, the proof is complete.
\end{proof}

As an immediate consequence of the uniform convergence proved
in Proposition~\ref{prop:ergodic}, we obtain
the {\bf mixing} property
$$
\lim_{n\to\infty} \int_{\SS^{d-1}}
E_x(\phi(X_n))\,\psi(x)\,d\rho(x) = \bar\phi \bar\psi
$$
for every pair of continuous functions $\phi, \psi$ on
$\SS^{d-1}$.  

\begin{proof}[Proof of Corollary~\ref{cor:Birkhoff}]
Let $\mu$ and $\rho$ be as in Theorem~\ref{Thm:ergodic}.
We will show that
for every $\rho$-integrable function on $\SS^{d-1}$.
\begin{equation}
\label{eq:Birkhoff-phi}
\lim_{N\to\infty}
\frac1N \sum_{k=1}^N \phi(X_k) = \int_{\SS^{d-1}}
\phi(y)\, d\rho(y)
\end{equation}
almost surely for $\rho$-almost every $x\in\SS^{d-1}$.
The claim then follows by setting
$\phi=\mathbbm{1}_A$.

The proof of Eq.~\eqref{eq:Birkhoff-phi}
calls for a standard application of
Birkhoff's ergodic theorem to the canonical
Markov chain associated with the random walk.
See for example~\cite[Chapter 6]{Varadhan}.
The invariant measure induces
a probability measure $\rho^*$ on $\Omega$ by
$$
\rho^*(A) = \int_{\SS^{d-1}} P_x(A)\, d\rho(x)\,.
$$
This measure is invariant under the left shift
$L( (X_n)_{n\ge 0} ) = (X_{n+1})_{n\ge 0}$,
and every shift-invariant subset $A\subset\Omega$
has $\rho^*(A)=0$ or $\rho^*(A)=1$.
By Birkhoff's theorem,
$$
\lim_{N\to\infty} \frac{1}{N}
\sum_{k=1}^N \Phi(L^k(X_n)_{n\ge 0}) = \int_\Omega \,
\Phi \,d\rho^*
$$
for every $\rho^*$-integrable function $\Phi$ on $\Omega$
and for $\rho^*$-almost every sequence
$(X_n)_{n\ge 0}$. In the case
where $\Phi((X_n)_{n\ge 0}) = \phi(X_0)$ depends only on 
the initial point, we have
$$
\Phi(L^k(X_n)_{n\ge 0}) = \phi(X_k)\,,\quad
\int_\Omega \Phi \, d\rho^*=\int_{\SS^{d-1}}\phi\, d\rho\,,
$$
which yields Eq.~\eqref{eq:Birkhoff-phi}
except for sequences $(X_n)$ in a set
$B\subset\Omega$ of $\rho^*$-measure zero.
By definition of $\rho^*$ and Fubini's theorem,
$P_x(B)=0$ for $\rho$-almost every $x\in\SS^{d-1}$.
\end{proof}

If $\phi$ is a continuous function on $\SS^{d-1}$,
then the functions
$\frac{1}{N} \sum_{n=1}^N\phi(X_n)$
are uniformly equicontinuous in $x$ for all
$N\ge 1$ and every sequence of directions
$(U_n)$, see Eq.~\eqref{eq:def-RW}.
Since $\SS^{d-1}$ is separable, it follows that
Eq.~\eqref{eq:Birkhoff-phi}
almost surely holds for every $x\in\SS^{d-1}$.

\section{Properties of the invariant measure}
\label{sec:inv-meas}

Finally, we study the properties of $\rho$. We find that
$\rho$ is generally not the uniform measure on the sphere.

\begin{prop}
\label{prop:even}
Under the assumptions of Theorem~\ref{Thm:ergodic},
the invariant measure is uniform 
on $\SS^{d-1}$, if and only if $\mu$ is even under 
$u\mapsto -u$.
\end{prop}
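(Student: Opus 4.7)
The plan is to reduce the equality $\rho=\sigma$ to a pointwise condition on $\mu$, and then analyze that condition via spherical harmonics. By the uniqueness part of Theorem~\ref{Thm:ergodic}, $\rho=\sigma$ is equivalent to $T_\mu\#\sigma=\sigma$, and hence to $\int T_\mu\phi\,d\sigma=\int \phi\,d\sigma$ for every continuous $\phi$ on $\SS^{d-1}$. Splitting the integral of $\phi\circ F_u$ across the crease $u^\perp$ (which is $\sigma$-null) and using the change of variable $y=R_u x$ on $H_{-u}$ gives
\[
\int\phi(F_u x)\,d\sigma(x)=2\int_{H_u}\phi\,d\sigma=\int_{\SS^{d-1}}\phi\,d\sigma+\int_{\SS^{d-1}}\operatorname{sgn}(u\cdot x)\,\phi(x)\,d\sigma(x)\,.
\]
Integrating against $\mu$ and swapping the order of integration by Fubini, the invariance of $\sigma$ reduces to the pointwise condition
\[
\Psi_\mu(x):=\int_{\SS^{d-1}}\operatorname{sgn}(u\cdot x)\,d\mu(u)=0 \quad\text{for $\sigma$-a.e. } x\in\SS^{d-1}\,.
\]

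Decompose $\mu=\mu_e+\mu_o$ into its parts under the antipodal map $u\mapsto -u$, with $\mu_e(A)=\tfrac12(\mu(A)+\mu(-A))$ a probability measure and $\mu_o$ a finite signed measure satisfying $\mu_o(-A)=-\mu_o(A)$. The oddness of $\operatorname{sgn}(u\cdot x)$ in $u$ combined with the antipodal invariance of $\mu_e$ forces $\Psi_{\mu_e}\equiv 0$, so $\Psi_\mu=\Psi_{\mu_o}$. This gives the easy direction: if $\mu$ is even, $\mu_o=0$, so $\Psi_\mu\equiv 0$ and $\rho=\sigma$.

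For the converse the key is the injectivity of $\nu\mapsto\Psi_\nu$ on antipodally odd signed measures. My plan is to pair $\Psi_{\mu_o}$ with each spherical harmonic $Y_k$ and apply the Funk--Hecke formula for the odd kernel $f(t)=\operatorname{sgn}(t)$:
\[
\int_{\SS^{d-1}}Y_k(x)\,\Psi_{\mu_o}(x)\,d\sigma(x)=\lambda_k\int_{\SS^{d-1}}Y_k\,d\mu_o\,,
\]
with Funk--Hecke eigenvalue $\lambda_k=0$ for even $k$ (since $f$ is odd) and $\lambda_k\ne 0$ for every odd $k\ge 1$. The nonvanishing of $\lambda_k$ for odd $k$ is the main obstacle; it follows from the fact that $\operatorname{sgn}$ has nonzero Gegenbauer coefficients in every odd degree against the ultraspherical weight on $[-1,1]$. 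Assuming $\Psi_\mu\equiv 0$, the displayed identity forces $\int Y_k\,d\mu_o=0$ for every odd $k$, while for even $k$ this integral vanishes automatically because $\mu_o$ is antipodally odd and $Y_k$ is even. Density of spherical harmonics in $C(\SS^{d-1})$ then gives $\mu_o=0$, i.e., $\mu$ is even.
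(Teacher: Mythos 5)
Your proposal is correct in outline and reaches the paper's conclusion by a somewhat different route. The paper first differentiates the identity $T_\mu\#\sigma=\sigma$ on small balls $B_\eps(v)$ (a Lebesgue-density argument with dominated convergence) to obtain $\mu(H_v)=\mu(H_{-v})$ for every $v$, and then invokes a separate lemma (Lemma~\ref{lem:even}) stating that equal hemisphere masses force $\mu$ to be even; that lemma is proved by mollifying $\mu$ to a smooth density and expanding the density in spherical harmonics via Funk--Hecke with the hemisphere-indicator kernel. You instead test invariance of $\sigma$ against continuous $\phi$, which yields the equivalent condition $\Psi_\mu(x)=\mu(H_x)-\mu(H_{-x})=0$ for $\sigma$-a.e.\ $x$ with no differentiation of measures, and you then pair $\Psi_{\mu_o}$ directly with each spherical harmonic using Funk--Hecke for the kernel $\operatorname{sgn}$, which eliminates the mollification step; the even/odd decomposition of $\mu$ also disposes of the easy direction in one line. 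This is a clean dual formulation of the same underlying harmonic-analysis idea, and the a.e.\ (rather than everywhere) vanishing of $\Psi_\mu$ is harmless since you only ever integrate it against $\sigma$.

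The one soft spot is the claim that the odd-degree Funk--Hecke eigenvalues $\lambda_k$ of $\operatorname{sgn}$ are nonzero. As written, your justification --- that $\operatorname{sgn}$ has nonzero Gegenbauer coefficients in every odd degree --- is circular, because the Funk--Hecke eigenvalue \emph{is} (up to a positive constant) that Gegenbauer coefficient. This is exactly where the paper spends its technical effort: it proves $\gamma_k\ne 0$ for odd $k$ (for the hemisphere indicator, which gives $\lambda_k=2\gamma_k$ on odd harmonics) by expressing the hemisphere integral of the zonal harmonic through the spherical Laplacian and the divergence theorem, and noting that an eigenfunction vanishing on the equator cannot also have vanishing normal derivative there. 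The fact you need is true and classical (injectivity of the hemispherical transform on odd functions), so your argument closes once you either cite such a result or supply a computation like the paper's; but as it stands this key step is asserted rather than proved.
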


\begin{proof} 
Recall that $\sigma$ denotes the uniform probability
measure on $\SS^{d-1}$.
We want to show that
$$
T_\mu \#\sigma = \sigma
\quad \Longleftrightarrow \quad
\mu(A)=\mu(-A)\ \text{for all Borel sets}\ A\subset\SS^{d-1}\,.
$$

$\Rightarrow$: Suppose 
$T_\mu\#\sigma = \sigma$.
Then, for every $v\in \SS^{d-1}$ and $\eps>0$,
$$
\int_{\SS^{d-1}} \sigma(F_u^{-1}(B_\eps(v)))\, d\mu(u) = \sigma(B_\eps)\,.
$$
By definition of the folding map,
the integrand is given by
$$
\sigma(F_u^{-1}(B_\eps(v))) = 2\sigma\left(B_\eps(v)\cap H_u\right)\,.
$$
We divide by $\sigma(B_\eps)$ and take $\eps\to 0$,
$$
\lim_{\eps\to 0^+}
\frac{\sigma(F_u^{-1}(B_\eps(v))} {\sigma(B_\eps)}
= \lim_{\eps\to 0^+}
\frac{2\sigma(B_\eps(v)\cap H_u)} {\sigma(B_\eps)}
= \left \{\begin{array}{ll}
2 \quad &\text{if}\ u\cdot v >0 \,, \\
1 &\text{if} \ u\cdot v = 0\,,\\
0 &\text{otherwise}\,.
\end{array}\right.
$$
Integrating both sides over $H_v$ and using that
$T_\mu \#\sigma=\sigma$, we obtain
by dominated convergence
$$
1
= \lim_{\eps\to 0^+} \int_{\SS^{d-1}}
\frac{\sigma(F_u^{-1}(B_\eps(v)))} {\sigma(B_\eps)}\, d\mu(u)
= 2\mu(H_v) + \mu(\partial H_v)\,.
$$
It follows that $\mu(H_v)=\mu(H_{-v})$ for all $v\in\SS^{d-1}$.
By Lemma~\ref{lem:even}, which is proved below, $\mu$ is even.

$\Leftarrow$: Conversely, if $\mu(A)=\mu(-A)$
for all $A\subset\SS^{d-1}$, then
\begin{align*}
T_\mu\#\sigma(A) 
&= \int \sigma(F_u^{-1}(A))\, d\mu(u)\\
&=\frac12 \int \sigma(F_u^{-1}(A)) + \sigma(F_{-u}^{-1}(A))\, d\mu(u)\\
&=\frac12 \int \sigma(A)+ \sigma(R_uA)\, d\mu(u)\\
&=\sigma(A)\,.
\end{align*}
In the second line, we have used that $\mu$ is
even to change the variable $u$ to $-u$
in half of the integral. The third line follows, since
$F_{\pm u}^{-1}(A)$  contains a copy
of $A\cap H_{\pm u}$ together with its mirror image.
In the last step we have exploited the
reflection invariance of $\sigma$.
\end{proof}

\begin{lem} \label{lem:even} 
Let $\mu$ be a regular Borel measure on
$\SS^{d-1}$. If
$$
\mu(H_u)=\mu(H_{-u})
$$
for all hemispheres $H_u$ with $u\in\SS^{d-1}$, then $\mu$ is even.
\end{lem}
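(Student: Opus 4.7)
The plan is to consider the signed measure $\nu = \mu - A_*\mu$, where $A(x)=-x$ is the antipodal map. Then $\nu$ is antipodally odd in the sense $A_*\nu=-\nu$, the hypothesis $\mu(H_u)=\mu(H_{-u})$ translates to $\nu(H_u)=0$ for every $u\in\SS^{d-1}$, and the desired conclusion $\mu(B)=\mu(-B)$ for all Borel $B$ amounts to $\nu=0$. Since every antipodally symmetric Borel set has $\nu$-measure zero by oddness (in particular $\SS^{d-1}$ itself and every equator $u^\perp\cap\SS^{d-1}$), the pointwise identity
\[
\mathbbm{1}_{H_u}(x) = \tfrac{1}{2}\mathbbm{1}_{\SS^{d-1}\setminus u^\perp}(x) + \tfrac{1}{2}\operatorname{sign}(x\cdot u)
\]
rewrites the hypothesis in the more workable form
\[
\int_{\SS^{d-1}} \operatorname{sign}(x\cdot u)\, d\nu(x) = 0,\qquad u\in\SS^{d-1}.
\]

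To prove $\nu=0$, by density of spherical harmonics in $C(\SS^{d-1})$ together with the Riesz--Markov representation theorem, it suffices to show $\int Y\, d\nu=0$ for every spherical harmonic $Y$. For $Y$ of even degree this is automatic: oddness of $\nu$ gives $\int Y(x)\, d\nu(x) = \int Y(-x)\, d\nu(x) = -\int Y(x)\, d\nu(x)$. For $Y_k$ of odd degree $k$, I multiply the displayed identity by $Y_k(u)$, integrate over $u$ against $\sigma$, and interchange the order of integration by Fubini (permitted since $|\nu|$ is finite and $\operatorname{sign}$, $Y_k$ are bounded). The Funk--Hecke formula then evaluates the inner integral as
\[
\int_{\SS^{d-1}} Y_k(u)\,\operatorname{sign}(x\cdot u)\, d\sigma(u) = \lambda_k\, Y_k(x),
\]
for a constant $\lambda_k$ depending only on $k$ and $d$, collapsing the identity to $\lambda_k\int Y_k\, d\nu=0$.

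The main obstacle is to verify $\lambda_k\ne 0$ for every odd $k\ge 1$. From the Funk--Hecke formula, $\lambda_k$ is proportional to $\int_{-1}^{1}\operatorname{sign}(t)\,C_k^{(d-2)/2}(t)(1-t^2)^{(d-3)/2}\,dt$. Because $C_k^{(d-2)/2}$ is an odd polynomial when $k$ is odd, the integrand is even, and this reduces to $2\int_0^1 C_k^{(d-2)/2}(t)(1-t^2)^{(d-3)/2}\,dt$, which is non-zero for every odd $k\ge 1$; this is the classical non-vanishing of the hemispherical-transform multipliers on odd spherical harmonics (equivalently, the injectivity of the hemispherical transform on odd continuous functions), verifiable by a short computation with the generating function of Gegenbauer polynomials. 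Granted $\lambda_k\ne 0$, we obtain $\int Y_k\,d\nu=0$ for every odd-degree $Y_k$, hence $\int Y\,d\nu=0$ for every spherical harmonic, so $\nu=0$ and $\mu=A_*\mu$ is even.
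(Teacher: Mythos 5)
Your argument is correct, and it takes a genuinely different route from the paper. You work dually: you form the odd signed measure $\nu=\mu-A_*\mu$, convert the hypothesis into $\int \operatorname{sign}(x\cdot u)\,d\nu(x)=0$ for all $u$, and test $\nu$ against spherical harmonics, using Funk--Hecke on the kernel $\operatorname{sign}(x\cdot u)$ and density of harmonics in $C(\SS^{d-1})$ to conclude $\nu=0$. The paper instead expands a \emph{density} of $\mu$ in spherical harmonics, which forces it to first assume $\mu$ has a smooth density and then handle general $\mu$ by mollifying over $SO(d)$; your measure-side argument removes that smoothing step entirely and treats arbitrary finite regular Borel measures at once, which is a real simplification. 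The analytic heart is the same in both proofs: the non-vanishing, for odd $k$, of the Funk--Hecke multiplier of the hemispherical (equivalently $\operatorname{sign}$) kernel --- your $\lambda_k$ is, up to a factor $2$, the paper's $\gamma_k$. You assert this as a classical fact, reducing it to $\int_0^1 C_k^{(d-2)/2}(t)(1-t^2)^{(d-3)/2}\,dt\neq 0$ and pointing to the Gegenbauer generating function; that claim is indeed true and classical (injectivity of the hemispherical transform on odd functions), but it is precisely the step the paper does not outsource: it proves $\gamma_k\neq 0$ self-containedly by writing $\gamma_k$ as a boundary term via the divergence theorem on the hemisphere and noting that an odd zonal eigenfunction cannot vanish on the equator together with its normal derivative, by uniqueness for the second-order ODE it satisfies. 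If you want your proof fully self-contained, you can either carry out the Gegenbauer computation (for $d=2$ replace $C_k^{0}$ by Chebyshev polynomials, where the integral is $\pm 2/k$ for odd $k$) or simply borrow the paper's divergence-theorem argument to show $\lambda_k\neq 0$.
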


\begin{proof}
Assume for the moment that $\mu$ is absolutely continuous
with respect to 
the uniform measure $\sigma$ on $\SS^{d-1}$, 
with a smooth density $m(x)$, and
consider the expansion in spherical harmonics
$$
m(x)=\sum_{k\ge o} Y_k(x)\,.
$$
By the Funk-Hecke formula~\cite[Theorem 9.7.1]{AAR}, 
there exist constants $\gamma_k$ such that
\begin{equation}
\label{eq:FH}
\int_{H_u} Y_k(x) \, d\sigma(x) 
= \gamma_k Y_k(u)
\end{equation}
for all $u\in\SS^{d-1}$.
In particular,
$$
\mu(H_u) = \int_{H_u} m(x)\, d\sigma(x) = \sum_{k\ge 0} \gamma_k Y_k(u)\,.
$$
Since $\mu(H_u)=\mu(H_{-u})$
for all $u\in\SS^{d-1}$ and $Y_k(u)=(-1)^kY_k(-u)$,
the component $\gamma_kY_k$ must vanish for each odd $k$ 
by the uniqueness of the expansion.
We claim that $\gamma_k\ne 0$ for odd $k$, forcing $Y_k=0$.

A key point of the Funk-Hecke formula is
that the constant $\gamma_k$ depends only on the order ($k$)
of the harmonic polynomial.  
It can be computed from Eq.~\eqref{eq:FH}
by setting $u=e_d$, and choosing
$Y_k$ to be the normalized
zonal harmonic $Z_k$, whose restriction to
$\SS^{d-1}$ depends only on the last variable $x_d$.
For $k>0$, we use that $Z_k$ is an eigenfunction of 
the spherical Laplacian with an eigenvalue $-\lambda_k<0$
to obtain
$$
\int_{\{x_d>0\}} Z_k \, d\sigma
= -\lambda_k^{-1}
\int_{\{x_d>0\}} \Delta Z_k\, d\sigma\\
= \lambda_k^{-1}
\int_{\{x_d=0\}} -\partial_{x_d} Z_k \, d\sigma \,.
$$
In the second step, we have applied the divergence theorem
on the sphere. Since $Z_k$ depends
only on the variable $x_d$, 
the normal derivative that appears in the
last integral is constant on the equatorial
sphere $\{x_d=0\}$.

For $k$ odd, $Z_k$ vanishes on $\{x_d=0\}$.
Since the eigenvalue-eigenvector
equation for $Z_k$ is a homogeneous linear
second-order ordinary differential equation
in $x_d$, the normal derivative cannot vanish
simultaneously, and so the integral is non-zero.
It follows that
$$
\gamma_k= \frac{1}{Z_k(e_d)} \int_{\{x_d>0\}} Z_k\, d\sigma
\ne 0\qquad (k\ \text{odd})\,.
$$
We conclude that
$$
\mu(A) = \sum_{k\ge 0\ \text{even}} \int_A Y_k\, d\sigma = \mu(-A)
$$
for all Borel sets $A\subset\SS^{d-1}$,
proving the claim when $\mu$ has a smooth density.

Otherwise, we approximate it with smooth measures
$\mu_\eps$, defined by
$$
\mu_\eps(A) = \int_{SO(d)} \mu(QA)\psi_\eps(Q)\, d\sigma(Q)\,,
$$
where $\psi_\eps$ is a smooth probability density
supported on an $\eps$-neighborhood of the identity in $SO(d)$,
and $\sigma$ is the uniform measure. 
Let $u\in\SS^{d-1}$. Since $Q(H_u)=H_{Qu} = -QH_{-u}$
by linearity, the assumption on $\mu$ implies that
$$
\mu(QH_u) = \mu(QH_{-u}) \qquad (Q\in SO(d))\,.
$$
Therefore, $\mu_\eps(H_u)=\mu_\eps(H_{-u})$ for all $u\in\SS^{d-1}$.
By the first part of the proof,
$\mu_\eps$ is even.  Taking $\eps\to 0$ we see that $\mu$ is even as well.
\end{proof}

We conclude with some open problems.
\begin{qu} Given a measure $\mu$ on $\SS^{d-1}$
whose support satisfies {\rm (C1)} and {\rm (C2)}, let 
$\rho$ be the invariant measure  from Theorem~\ref{Thm:ergodic}. 
Is it true that
$$
\rho(A)=0 \quad \Longleftrightarrow \quad \sigma(A)=0\,?
$$
\end{qu}

We suspect that the answer is affirmative.
To motivate this, we write Eq.~\eqref{eq:T-sharp} as
$$
T_\mu\#\nu(A) 
= \int_{\SS^{d-1}}\int_{\SS^{d-1}} \mathbbm{1}_A(F_u(x))\,
d\mu(u)d\nu(x)\,,
$$
and observe that $T_\mu\#\nu$ is absolutely continuous with
respect to $\sigma$,
whenever either $\mu$ or $\nu$ is absolutely continuous.
In particular, the absolutely
continuous component of the invariant measure
$\rho$ is itself invariant. By the uniqueness
part of Theorem~\ref{Thm:ergodic},
$\rho$ is either absolutely continuous or purely singular.
If $\mu$ has an absolutely continuous component
in its Lebesgue decomposition, then $\rho$ is
absolutely continuous. If
$\mu$ is singular, nothing is known.
If $\rho$ were singular as well, then the
trajectories of the random walk would accumulate
on a set of measure zero.

Another problem is to characterize how quickly the
state of the Markov chain associated
with the random walk in Eq.~\eqref{eq:def-RW} converges to the
steady-state.

\begin{qu} Given a measure $\mu$ on $\SS^{d-1}$
whose support satisfies {\rm (C1)} and {\rm (C2)}, 
fix $x\in\SS^{d-1}$, and
define the random walk $X_n$ by Eq.~\eqref{eq:def-RW}.
At what asymptotic rate does the distribution of
$X_n$
converge to $\rho$ as $n\to\infty$? Is there a cut-off phenomenon?
\end{qu}

One approach is to analyze the convergence
of $\phi_n= (T_\mu)^n\phi$ to its limit 
$\bar\phi = \int\phi\, d\rho$ for continuous functions $\phi$
on $\SS^{d-1}$.  In the case where $\mu=\sigma$ is the uniform measure
on the sphere, computations similar to
those in~\cite[Proposition 5.2]{BF}
and~\cite{Bur} suggest that the difference
$||\phi_n-\bar\phi||$
decreases with $n^{-1}$ in the number of steps.
Can this rate of convergence
be improved by a judicious choice
of $\mu$, perhaps supported on a finite set?

\section*{Acknowledgments} We thank
Dima Jakobson, Neal Sloane, and Qin Deng for
helpful discussions.
This work was supported in part by
an NSERC Discovery Grant (A.B.)
and an Ontario Graduate Fellowship (G.R.C.).


\medskip
\medskip

\end{document}